\newcommand{\abs}[1]{\left|#1\right|}
\newcommand{\bdry}[1]{\partial #1}
\newcommand{\closure}[1]{\overline{#1}}
\newcommand{\dint}{\ds{\int}}
\newcommand{\dist}[2]{\text{dist}\, (#1,#2)}
\newcommand{\ds}[1]{\displaystyle #1}
\newcommand{\eps}{\varepsilon}
\newcommand{\goodchi}{\protect\raisebox{2pt}{$\chi$}}
\newcommand{\hquad}{\hspace{0.08in}}
\newcommand{\interior}[1]{#1^\circ}
\newcommand{\ip}[2]{\left<#1,#2\right>}
\newcommand{\loc}{\text{loc}}
\newcommand{\norm}[2][]{\left\|#2\right\|_{#1}}
\renewcommand{\O}{\text{O}}
\renewcommand{\o}{\text{o}}
\newcommand{\PS}[1]{$(\text{PS})_{#1}$}
\newcommand{\QED}{\mbox{\qedhere}}
\newcommand{\restr}[2]{\left.#1\right|_{#2}}
\newcommand{\seq}[1]{\left(#1\right)}
\newcommand{\set}[1]{\left\{#1\right\}}
\newcommand{\vol}[1]{\left|#1\right|}
\newcommand{\wstar}{\xrightarrow{w^\ast}}
\newcommand{\wto}{\rightharpoonup}
\newcommand{\B}{{\cal B}}
\newcommand{\D}{{\cal D}}
\newcommand{\M}{{\cal M}}
\newcommand{\R}{\mathbb R}
\DeclareMathOperator{\divg}{div}
\DeclareMathOperator{\supp}{supp}
\newenvironment{enumroman}{\begin{enumerate}

}{\end{enumerate}}
\newtheorem{lemma}{Lemma}[section]
\newtheorem{proposition}[lemma]{Proposition}
\newtheorem{theorem}[lemma]{Theorem}
\theoremstyle{definition}
\newtheorem*{notation}{Notation}
\numberwithin{equation}{section}
\title{\bf Existence and nondegeneracy of ground states in critical free boundary problems\thanks{{\em MSC2010:} Primary 35R35, 35B33, Secondary 35J20
\newline \indent\; {\em Key Words and Phrases:} critical free boundary problems, ground state solutions, existence, nondegeneracy, nondifferentiable energy functional, regularization, concentration compactness, mountain pass theorem, regularity of the free boundary}}
\author{\bf Yang Yang\thanks{Corresponding author
\newline \indent\; Project supported by NSFC-Tian Yuan Special Foundation (No. 11226116), Natural Science Foundation of Jiangsu Province of China for Young Scholars (No. BK2012109), and the China Scholarship Council (No. 201208320435).}\\
School of Science\\
Jiangnan University\\
Wuxi, 214122, China\\
[\bigskipamount]
\bf Kanishka Perera\\
Department of Mathematical Sciences\\
Florida Institute of Technology\\
Melbourne, FL 32901, USA}
\date{}
\begin{document}

\maketitle

\begin{abstract}
Existence and regularity of minimizers in elliptic free boundary problems have been extensively studied in the literature. The corresponding study of higher critical points was recently initiated in Jerison and Perera \cite{JePe2,JePe}. In particular, the existence and nondegeneracy of a mountain pass point in a superlinear and subcritical free boundary problem related to plasma confinement was proved in \cite{JePe2}. In this paper we study ground states of a critical free boundary problem related to the Br{\'e}zis-Nirenberg problem \cite{MR709644}. We extend the results of \cite{JePe2} to this problem by combining the method introduced there with the concentration compactness principle to overcome the difficulties arising from lack of compactness.
\end{abstract}

\tableofcontents

\section{Introduction and main results}

Existence and regularity of minimizers in elliptic free boundary problems have been extensively studied for over four decades (see, e.g., \cite{MR0440187,MR618549,MR732100,MR861482,MR990856,MR1029856,MR1009785,MR973745,MR1044809,MR1664689,MR1620644,MR1759450,MR1906591,MR2082392,MR2145284,MR2281453,MR2572253} and the references therein). Let $\Omega$ be a bounded domain in $\R^N,\, N \ge 2$ with $\bdry{\Omega} \in C^2$. A typical two-phase free boundary problem seeks a minimizer of the variational integral
\[
\int_\Omega \left[\frac{1}{2}\, |\nabla u|^2 + \goodchi_{\set{u > 0}}(x)\right] dx
\]
among all functions $u \in H^1(\Omega) \cap C(\Omega)$ with prescribed values on some portion of the boundary $\bdry{\Omega}$, where $\goodchi_{\set{u > 0}}$ is the characteristic function of the set $\set{u > 0}$. A local minimizer $u$ satisfies
\[
\Delta u = 0
\]
except on the free boundary $\bdry{\set{u > 0}} \cap \Omega$, and
\[
|\nabla u^+|^2 - |\nabla u^-|^2 = 2
\]
on smooth portions of the free boundary, where $\nabla u^\pm$ are the limits of $\nabla u$ from $\set{u > 0}$ and $\interior{\set{u \le 0}}$, respectively. The existence and regularity of local minimizers for this problem have been studied, for example, in Alt and Caffarelli \cite{MR618549}, Alt, Caffarelli and Friedman \cite{MR732100}, Caffarelli, Jerison and Kenig \cite{MR1906591,MR2082392}, and Weiss \cite{MR1620644,MR1759450}.

The corresponding study of higher critical points was recently initiated in Jerison and Perera \cite{JePe2,JePe}. In particular, the following superlinear and subcritical free boundary problem was studied in \cite{JePe2}:
\[
\left\{\begin{aligned}
- \Delta u & = (u - 1)_+^{p-1} && \text{in } \Omega \setminus \bdry{\set{u > 1}}\\[10pt]
|\nabla u^+|^2 - |\nabla u^-|^2 & = 2 && \text{on } \bdry{\set{u > 1}}\\[10pt]
u & = 0 && \text{on } \bdry{\Omega},
\end{aligned}\right.
\]
where $u_\pm = \max \set{\pm u,0}$ are the positive and negative parts of $u$, respectively, $p > 2$ if $N = 2$ and $2 < p < 2^\ast$ if $N \ge 3$, and $\nabla u^\pm$ are the limits of $\nabla u$ from $\set{u > 1}$ and $\interior{\set{u \le 1}}$, respectively. Here $2^\ast = 2N/(N - 2)$ is the critical Sobolev exponent when $N \ge 3$. The interest in this problem arises from its applications in plasma physics (see, e.g., \cite{MR0412637,MR0602544,MR587175,MR1360544,MR1644436,MR1932180}). The energy functional
\[
J(u) = \int_\Omega \left[\frac{1}{2}\, |\nabla u|^2 + \goodchi_{\set{u > 1}}(x) - \frac{1}{p}\, (u - 1)_+^p\right] dx, \quad u \in H^1_0(\Omega)
\]
associated with this problem is nondifferentiable and therefore standard variational methods cannot be used directly to obtain critical points. A regularization procedure was used in \cite{JePe2} to construct a nontrivial and nondegenerate solution $u$ of mountain pass type that satisfies the equation $- \Delta u = (u - 1)_+^{p-1}$ in the classical sense in $\Omega \setminus \bdry{\set{u > 1}}$, the free boundary condition $|\nabla u^+|^2 - |\nabla u^-|^2 = 2$ in a generalized sense and in the viscosity sense, and vanishes continuously on $\bdry{\Omega}$. Moreover, it was shown in \cite{JePe2} that in a neighborhood of every regular point, the free boundary $\bdry{\set{u > 1}}$ is a $C^{1,\, \alpha}$-surface and hence $u$ satisfies the free boundary condition in the classical sense.

In the present paper we assume that $N \ge 3$ and study the critical free boundary problem
\begin{equation} \label{1.1}
\left\{\begin{aligned}
- \Delta u & = \lambda\, (u - 1)_+ + \kappa\, (u - 1)_+^{2^\ast - 1} && \text{in } \Omega \setminus \bdry{\set{u > 1}}\\[10pt]
|\nabla u^+|^2 - |\nabla u^-|^2 & = 2 && \text{on } \bdry{\set{u > 1}}\\[10pt]
u & = 0 && \text{on } \bdry{\Omega},
\end{aligned}\right.
\end{equation}
where $\lambda, \kappa > 0$ are parameters. Let $\lambda_1 > 0$ be the first eigenvalue of
\[
\left\{\begin{aligned}
- \Delta u & = \lambda u && \text{in } \Omega\\[10pt]
u & = 0 && \text{on } \bdry{\Omega}.
\end{aligned}\right.
\]
Our main result here is that for all $\lambda > \lambda_1$ and sufficiently small $\kappa > 0$, problem \eqref{1.1} has a nontrivial and nondegenerate solution of mountain pass type. This extension of the result in Jerison and Perera \cite{JePe2} to the critical case is nontrivial. Indeed, the noncompactness of the Sobolev imbedding $H^1_0(\Omega) \hookrightarrow L^{2^\ast}(\Omega)$ presents serious new difficulties. We will overcome these difficulties using concentration compactness techniques.

The solution of problem \eqref{1.1} that we construct is a locally Lipschitz continuous function $u$ of class $H^1_0(\Omega) \cap C(\closure{\Omega}) \cap C^1(\closure{\Omega} \setminus \bdry{\set{u > 1}}) \cap C^2(\Omega \setminus \bdry{\set{u > 1}})$ that satisfies the equation $- \Delta u = \lambda\, (u - 1)_+ + \kappa\, (u - 1)_+^{2^\ast - 1}$ in the classical sense in $\Omega \setminus \bdry{\set{u > 1}}$ and vanishes continuously on $\bdry{\Omega}$. It satisfies the free boundary condition in the following generalized sense: for all $\varphi \in C^1_0(\Omega,\R^N)$ such that $u \ne 1$ a.e.\! on the support of $\varphi$,
\begin{equation} \label{1.2}
\lim_{\delta^+ \searrow 0}\, \int_{\set{u = 1 + \delta^+}} \left(|\nabla u|^2 - 2\right) \varphi \cdot n^+\, dS - \lim_{\delta^- \searrow 0}\, \int_{\set{u = 1 - \delta^-}} |\nabla u|^2\, \varphi \cdot n^-\, dS = 0,
\end{equation}
where $n^\pm$ are the outward unit normals to $\bdry{\set{u > 1 \pm \delta^\pm}}$ ($\set{u = 1 \pm \delta^\pm}$ are smooth hypersurfaces for a.a.\! $\delta^\pm > 0$ by Sard's theorem and the above limits are taken through such $\delta^\pm$). In particular, $u$ satisfies the free boundary condition in the classical sense on any smooth portion of the free boundary $\bdry{\set{u > 1}}$. We will refer to such a function $u$ as a generalized solution of problem \eqref{1.1}.

If $u$ is a generalized solution of problem \eqref{1.1}, then by the maximum principle, the set $\set{u < 1}$ is connected and either $u > 0$ everywhere or $u$ vanishes identically. If $u \le 1$ everywhere, then $u$ is harmonic in $\Omega$ and hence vanishes identically again. So if $u$ is a nontrivial solution, then $u > 0$ in $\Omega$ and $u > 1$ in a nonempty open subset of $\Omega$, where it satisfies $- \Delta u = \lambda\, (u - 1) + \kappa\, (u - 1)^{2^\ast - 1}$. Multiplying this equation by $u - 1$ and integrating over the set $\set{u > 1}$ shows that $u$ lies on the Nehari-type manifold
\[
\M = \set{u \in H^1_0(\Omega) : \int_{\set{u > 1}} \Big[|\nabla u|^2 - \lambda\, (u - 1)^2\Big]\, dx = \kappa \int_{\set{u > 1}} (u - 1)^{2^\ast}\, dx > 0}.
\]
The variational functional associated with problem \eqref{1.1} is
\[
J(u) = \int_\Omega \left[\frac{1}{2}\, |\nabla u|^2 + \goodchi_{\set{u > 1}}(x) - \frac{\lambda}{2}\, (u - 1)_+^2 - \frac{\kappa}{2^\ast}\, (u - 1)_+^{2^\ast}\right] dx, \quad u \in H^1_0(\Omega).
\]
For $u \in \M$,
\[
J(u) = \frac{1}{2} \int_{\set{u < 1}} |\nabla u|^2\, dx + \frac{\kappa}{N} \int_{\set{u > 1}} (u - 1)^{2^\ast}\, dx + \vol{\set{u > 1}} > 0,
\]
where $\vol{\cdot}$ denotes the Lebesgue measure in $\R^N$. We will refer to a generalized solution of problem \eqref{1.1} that minimizes $\restr{J}{\M}$ as a ground state.

A generalized solution $u$ of problem \eqref{1.1} is said to be nondegenerate if there exist constants $r_0,\, c > 0$ such that if $x_0 \in \set{u > 1}$ and $r := \dist{x_0}{\set{u \le 1}} \le r_0$, then
\[
u(x_0) \ge 1 + c\, r.
\]
First we prove that ground states are nondegenerate. We have the following theorem.

\begin{theorem} \label{Theorem 1.1}
If $u$ is a locally Lipschitz continuous minimizer of $\restr{J}{\M}$, then $u$ is nondegenerate.
\end{theorem}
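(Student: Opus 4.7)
The plan is to argue by contradiction. Suppose that for arbitrarily small $c,\, r_0 > 0$ there exist $x_0 \in \set{u > 1}$ with $r := \dist{x_0}{\set{u \le 1}} \le r_0$ and $u(x_0) < 1 + cr$; write $\eps := (u(x_0) - 1)/r < c$. The goal is to construct $\tilde v \in \M$ with $J(\tilde v) < J(u)$ once $c$, and then $r_0 = r_0(c)$, are sufficiently small, contradicting the minimality of $u$ on $\M$.

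\emph{Step 1 (Harnack-type estimates on $B_r(x_0)$).} Since $B_r(x_0)$ is open and contained in $\set{u > 1}$, where $u \in C^2$ satisfies $-\Delta u = [\lambda + \kappa(u - 1)^{2^\ast - 2}](u - 1)$, the positive function $u - 1$ solves a linear elliptic equation whose zero-order coefficient is bounded uniformly in terms of $\lambda, \kappa$, and the Lipschitz bound on $u$. Harnack's inequality yields $\sup_{B_{r/2}(x_0)}(u - 1) \le C \eps r$, and standard interior gradient estimates give $|\nabla u| \le C \eps$ on $B_{r/4}(x_0)$, with $C$ independent of $c, r_0$.

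\emph{Step 2 (competitor and Nehari projection).} Fix $\eta \in C^\infty_c(B_{r/4}(x_0))$ with $\eta \equiv 1$ on $B_{r/8}(x_0)$ and $|\nabla \eta| \le C/r$, and put $v := u - \eta\, (u - 1)_+$. Then $v$ is continuous, $v \equiv 1$ on $B_{r/8}(x_0)$, $v = u$ off $B_{r/4}(x_0)$, $\set{v > 1} = \set{u > 1} \setminus B_{r/8}(x_0)$, and $\set{v < 1} = \set{u < 1}$ on which $v = u$. Writing
\[
A_w := \int_{\set{w > 1}}\big[|\nabla w|^2 - \lambda(w - 1)^2\big]\, dx, \qquad B_w := \int_{\set{w > 1}}(w - 1)^{2^\ast}\, dx,
\]
one checks from Step 1 that $|A_v - A_u| \le C \eps^2 r^N$ and $|B_v - B_u| \le C \eps^{2^\ast} r^{N + 2^\ast}$. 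Since $u \in \M$ gives $A_u = \kappa B_u > 0$ fixed, for $r_0$ small we have $A_v, B_v > 0$. Define $v_t := 1 + t(v - 1)$ on $\set{v > 1}$ and $v_t := v$ on $\set{v \le 1}$ (a continuous element of $H^1_0(\Omega)$ with $\set{v_t > 1} = \set{v > 1}$), and set $t^\ast := (A_v / (\kappa B_v))^{1/(2^\ast - 2)}$, so that $\tilde v := v_{t^\ast}$ lies on $\M$.

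\emph{Step 3 (energy comparison).} A direct computation along the ray gives
\[
J(v_t) = c_0(v) + \frac{t^2 A_v}{2} - \frac{\kappa t^{2^\ast} B_v}{2^\ast}, \qquad c_0(v) := \tfrac{1}{2}\int_{\set{v < 1}}|\nabla v|^2\, dx + \vol{\set{v > 1}},
\]
hence $J(\tilde v) = c_0(v) + F(A_v, B_v)/N$ with $F(A, B) := A^{N/2}/(\kappa B)^{(N-2)/2}$. Applied to $u$ (where $t^\ast = 1$ and $F(A_u, B_u) = A_u$), the same identity reads $J(u) = c_0(u) + A_u/N$. Since $v = u$ on $\set{u < 1}$, we have $c_0(v) - c_0(u) = -\vol{B_{r/8}(x_0)}$, and the smoothness of $F$ at the fixed positive point $(A_u, B_u)$ yields
\[
J(\tilde v) - J(u) = -\vol{B_{r/8}(x_0)} + \tfrac{1}{N}\big[F(A_v, B_v) - F(A_u, B_u)\big] \le -c_1 r^N + C \eps^2 r^N < 0
\]
once $c$ is chosen small enough, contradicting the minimality of $u$ on $\M$. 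The main technical obstacle is Step 1, where the Harnack and interior gradient bounds must hold with constants uniform in $x_0, r$; the subsequent estimates $|A_v - A_u| = O(\eps^2 r^N)$ and the smoothness of $F$ near $(A_u, B_u)$ must then be tracked carefully so that the $O(r^N)$ energy gain from removing $B_{r/8}(x_0)$ dominates the $O(\eps^2 r^N)$ cost of the projection.
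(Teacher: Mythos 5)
Your proposal is correct and follows essentially the same strategy as the paper: a Harnack bound $u-1\le C\eps r$ near the degenerate point, a competitor obtained by flattening $u$ to $1$ on a small ball, projection back onto $\M$ via the fibering map, and the observation that the volume gain $\sim r^N$ dominates the $O(\eps^2 r^N)$ cost of the truncation and of the shift in the projection parameter. The only material difference is in the competitor: the paper uses $\min\set{v, C(\alpha+r^2)\psi}$ after rescaling, which controls the gradient of the modification directly through $\nabla\psi$, whereas your $u-\eta(u-1)_+$ additionally requires the interior gradient estimate $|\nabla u|\le C\eps$ on $B_{r/4}(x_0)$ — a standard but extra step.
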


We recall that $u \in C(\Omega)$ satisfies the free boundary condition $|\nabla u^+|^2 - |\nabla u^-|^2 = 2$ in the viscosity sense if whenever there exist a point $x_0 \in \bdry{\set{u > 1}}$, a ball $B \subset \set{u > 1}$ (resp. $\interior{\set{u \le 1}}$) with $x_0 \in \bdry{B}$, and $\alpha$ (resp. $\gamma$) $\ge 0$ such that
\[
u(x) \ge 1 + \alpha \ip{x - x_0}{\nu}_+ + \o(|x - x_0|) \hquad (\text{resp. } u(x) \le 1 - \gamma \ip{x - x_0}{\nu}_- + \o(|x - x_0|))
\]
in $B$, where $\nu$ is the interior (resp. exterior) unit normal to $\bdry{B}$ at $x_0$, we have
\[
u(x) < 1 - \gamma \ip{x - x_0}{\nu}_- + \o(|x - x_0|) \hquad (\text{resp. } u(x) > 1 + \alpha \ip{x - x_0}{\nu}_+ + \o(|x - x_0|))
\]
in $B^c$ for any $\gamma$ (resp. $\alpha$) $\ge 0$ such that $\alpha^2 - \gamma^2 >$ (resp. $<$) $2$. Recall also that the point $x_0 \in \bdry{\set{u > 1}}$ is regular if there exists a unit vector $\nu \in \R^N$, called the interior unit normal to the free boundary $\bdry{\set{u > 1}}$ at $x_0$ in the measure theoretic sense, such that
\[
\lim_{r \to 0}\, \frac{1}{r^N} \int_{B_r(x_0)} \abs{\goodchi_{\set{u > 1}}(x) - \goodchi_{\set{\ip{x - x_0}{\nu} > 0}}(x)} dx = 0.
\]
Nondegeneracy will allow us to apply recent regularity results of Lederman and Wolanski \cite{MR2281453} to show that the ground state we construct satisfies the free boundary condition in the viscosity sense, and that near regular points the free boundary is a smooth surface and hence this condition holds in the classical sense.

We turn to constructing a ground state. The functional $J$ has the mountain pass geometry. Let
\[
\Gamma = \set{\gamma \in C([0,1],H^1_0(\Omega)) : \gamma(0) = 0,\, J(\gamma(1)) < 0}
\]
be the class of paths joining $0$ and the set $\set{u \in H^1_0(\Omega) : J(u) < 0}$, and set
\begin{equation} \label{1.3}
c := \inf_{\gamma \in \Gamma}\, \max_{u \in \gamma([0,1])}\, J(u).
\end{equation}
Recall that $u \in H^1_0(\Omega)$ is a mountain pass point of $J$ if the set $\set{v \in U : J(v) < J(u)}$ is neither empty nor path connected for every neighborhood $U$ of $u$ (see Hofer \cite{MR812787}). We will construct a ground state of mountain pass type at the level $c$ when $\lambda > \lambda_1$ and $\kappa > 0$ is sufficiently small. The main existence result of the paper is the following theorem.

\begin{theorem} \label{Theorem 1.2}
Given $\lambda_\ast > \lambda_1$, there exists a constant $\kappa_\ast > 0$, depending only on $\lambda_\ast$ and $\Omega$, such that for all $\lambda \ge \lambda_\ast$ and $0 < \kappa < \kappa_\ast$, there exists a positive ground state solution $u$ of problem \eqref{1.1} satisfying
\begin{enumroman}
\item $u$ is a mountain pass point of $J$ at the level $c > 0$,
\item $u$ is nondegenerate,
\item $u$ satisfies the free boundary condition in the viscosity sense,
\item in a neighborhood of every regular point, the free boundary $\bdry{\set{u > 1}}$ is a $C^{1,\, \alpha}$-surface and $u$ satisfies the free boundary condition in the classical sense.
\end{enumroman}
\end{theorem}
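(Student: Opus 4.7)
The plan is to follow the regularization scheme developed for the subcritical problem in Jerison--Perera \cite{JePe2}, and to augment it with concentration--compactness estimates of Br\'ezis--Nirenberg type in order to control the loss of compactness caused by the critical Sobolev exponent. Since the nondifferentiability of $J$ comes solely from the free boundary term $\goodchi_{\set{u > 1}}$, I would first replace this term by a smooth monotone approximation $F_\eps : \R \to [0,1]$ with $F_\eps(s) \equiv 0$ for $s \le 1$ and $F_\eps(s) \equiv 1$ for $s \ge 1 + \eps$, producing the $C^1$ regularized functional
\[
J_\eps(u) = \int_\Omega \left[\tfrac{1}{2}\,|\nabla u|^2 + F_\eps(u) - \tfrac{\lambda}{2}\,(u - 1)_+^2 - \tfrac{\kappa}{2^\ast}\,(u - 1)_+^{2^\ast}\right] dx.
\]
A standard Sobolev estimate shows that for $\|\nabla u\|_{L^2(\Omega)} = \rho$ small, the measure of $\set{u > 1}$ is as small as one pleases, so $J_\eps(u) \ge \alpha\, \rho^2$ uniformly in $\eps$ and $\kappa$. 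On the other hand, since $\lambda > \lambda_\ast > \lambda_1$, the map $t \mapsto J_\eps(t\, \varphi_1)$, with $\varphi_1$ a positive first eigenfunction, tends to $-\infty$ through its quadratic part, independently of $\eps$ and $\kappa$. Thus $J_\eps$ has mountain pass geometry at a level $c_\eps$ bounded above by a constant $C_0 = C_0(\lambda_\ast, \Omega)$ independent of both $\eps$ and $\kappa$.

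The crucial step is the sub-threshold energy estimate
\[
c_\eps \; < \; \frac{1}{N}\, \frac{S^{N/2}}{\kappa^{(N-2)/2}},
\]
where $S$ is the best Sobolev constant; this is the Br\'ezis--Nirenberg level below which Palais--Smale sequences for the critical exponent are relatively compact. Since the right-hand side blows up as $\kappa \searrow 0$ while $c_\eps \le C_0$ is uniform, one can choose $\kappa_\ast = \kappa_\ast(\lambda_\ast, \Omega)$ small enough to force the inequality for every $\eps$. A concentration--compactness analysis applied to Palais--Smale sequences of $J_\eps$ below this threshold then rules out bubbling in $L^{2^\ast}$ and yields strong convergence in $H^1_0(\Omega)$; the mountain pass theorem produces a critical point $u_\eps$ of $J_\eps$ at level $c_\eps$, which by a Hofer-type structural argument as in \cite{JePe2} can be taken to be a mountain pass point of $J_\eps$.

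To finish I would pass to the limit $\eps \to 0$. Adapting the uniform $L^\infty$ and local Lipschitz estimates from \cite{JePe2}, together with elliptic regularity away from $\bdry{\set{u > 1}}$, one extracts a subsequence $u_\eps \to u$ strongly in $H^1_0(\Omega) \cap C(\closure{\Omega})$ and in $C^1_{\loc}(\closure{\Omega} \setminus \bdry{\set{u > 1}})$, with $u$ satisfying the PDE, the boundary condition, and the generalized free boundary condition \eqref{1.2} described in the introduction. The minimax level and the mountain pass property transfer to $u$ by the continuity argument of \cite{JePe2}, giving (i); positivity is immediate from the strong maximum principle; and a direct comparison between the minimax value $c$ and $\inf_\M J$ shows that $u$ minimizes $\restr{J}{\M}$, so $u$ is a ground state. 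Then Theorem \ref{Theorem 1.1} yields the nondegeneracy (ii), and with (ii) in hand the regularity theory of Lederman--Wolanski \cite{MR2281453} delivers (iii) and (iv).

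The main obstacle I expect is the sub-threshold energy estimate uniformly in $\eps$: the regularization $F_\eps$ contributes a volume-type perturbation that has to be controlled against the strictly negative gain coming from the $\lambda\,(u - 1)_+^2$ term along the test path through $\varphi_1$, and the strict inequality must persist in the limit $\eps \to 0$. A secondary difficulty lies in ensuring, uniformly in $\eps$, that $c_\eps \ge \alpha > 0$ and that the local Lipschitz bound is independent of $\eps$, so that the limit $u$ does not collapse to the trivial critical point and retains the mountain pass character.
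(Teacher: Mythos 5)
Your outline reproduces the paper's strategy almost exactly: regularize $J$ to $C^1$ functionals $J_\eps$, verify a $\kappa$- and $\eps$-uniform mountain pass geometry with $c_\eps\ge\rho^2/3>0$ and $c_\eps\le c\le M_{\lambda_\ast}$ (the upper bound via the path $t\mapsto t\varphi_1$, using $\lambda\ge\lambda_\ast>\lambda_1$), restore compactness by concentration compactness after choosing $\kappa$ small relative to the uniform energy bound, pass to the limit $\eps\to0$, and identify the limit as a ground state by sandwiching $J(u)$ between $c$ and $\inf_\M J$ via the projection onto the Nehari-type manifold; nondegeneracy then comes from Theorem \ref{Theorem 1.1} and the regularity from Lederman--Wolanski. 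Two remarks on details. First, your compactness threshold should read $c_\eps+\vol{\Omega}<\frac{1}{N}S^{N/2}\kappa^{-(N-2)/2}$ rather than $c_\eps<\frac{1}{N}S^{N/2}\kappa^{-(N-2)/2}$: the term $\B((u-1)/\eps)$ (your $F_\eps$) contributes up to $\vol{\Omega}$ to the identity $J_\eps(u)-\tfrac12 J_\eps'(u)u^+\ge\frac{\kappa}{N}\int(u^+)^{2^\ast}-\vol{\Omega}$, which is what controls the concentration masses. This is harmless for the strategy (one simply takes $\kappa_\ast$ in terms of $M_{\lambda_\ast}+\vol{\Omega}$), and incidentally your ``main obstacle'' is a non-issue: since $\B\le\goodchi_{\set{t>1}}$ one has $J_\eps\le J$, so $c_\eps\le c$ and no delicate limiting argument along the test path is needed.

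The genuine gap is the uniform $L^\infty$ bound for the critical points $u_\eps$. You propose to ``adapt'' it from \cite{JePe2}, but in the subcritical case that bound is a routine bootstrap, whereas at critical growth an $H^1$ bound does not yield an $L^\infty$ bound by elliptic regularity alone; one needs a Brezis--Kato/Moser-type iteration, and --- crucially --- one that is uniform in $\eps$. This is precisely the content of Proposition \ref{Proposition 1.5}: testing the equation with $u^{2^\ast-1}$ and using the Sobolev inequality produces a coefficient $\left(1-\frac{4}{N^2}\right)S-\kappa^{(N-2)/N}\bigl[N(M+\vol{\Omega})\bigr]^{2/N}$ in front of $\norm[2^\ast]{u^{N/(N-2)}}^2$, which must be positive; this forces the \emph{further} smallness condition $\kappa<\kappa_\ast=\left(1-\frac{4}{N^2}\right)^{N/(N-2)}\kappa^\ast$, and it is this $\kappa_\ast$, not the compactness threshold $\kappa^\ast$, that appears in the statement of the theorem. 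Without this step the passage to the limit (Theorem \ref{Theorem 1.3}, which requires boundedness in $L^\infty$ to invoke the uniform Lipschitz estimate of Caffarelli--Jerison--Kenig) cannot be carried out, so you should supply this argument rather than cite the subcritical case.
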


The existence result in the theorem does not follow from a routine application of the mountain pass theorem and the concentration compactness principle due to the lack of smoothness of $J$. Indeed, $J$ is not even continuous, much less of class $C^1$. We will obtain our solution as the limit of mountain pass points of a suitable sequence of $C^1$-functionals approximating $J$ as in Jerison and Perera \cite{JePe2,JePe}. However, we will carry out this regularization procedure for a general nonlinearity that admits critical growth. Consider the free boundary problem
\[
\left\{\begin{aligned}
- \Delta u & = f(u - 1) && \text{in } \Omega \setminus \bdry{\set{u > 1}}\\[10pt]
|\nabla u^+|^2 - |\nabla u^-|^2 & = 2 && \text{on } \bdry{\set{u > 1}}\\[10pt]
u & = 0 && \text{on } \bdry{\Omega},
\end{aligned}\right.
\]
where $f$ is a locally H\"{o}lder continuous function on $\R$ satisfying
\begin{enumerate}
\item[$(f_1)$] $f(t) = 0$ for a.a.\! $x \in \Omega$ and all $t \le 0$,
\item[$(f_2)$] $\exists\, C > 0$ such that $|f(t)| \le C \left(t^{2^\ast - 1} + 1\right)$ for a.a.\! $x \in \Omega$ and all $t > 0$.
\end{enumerate}
The variational functional associated with this problem is
\[
J(u) = \int_\Omega \left[\frac{1}{2}\, |\nabla u|^2 + \goodchi_{\set{u > 1}}(x) - F(u - 1)\right] dx, \quad u \in H^1_0(\Omega),
\]
where $F(t) = \int_0^t f(s)\, ds$. Let $\beta : \R \to [0,2]$ be a smooth function such that $\beta(t) = 0$ for $t \le 0$, $\beta(t) > 0$ for $0 < t < 1$, $\beta(t) = 0$ for $t \ge 1$, and $\int_0^1 \beta(s)\, ds = 1$. Set
\[
\B(t) = \int_0^t \beta(s)\, ds,
\]
and note that $\B : \R \to [0,1]$ is a smooth nondecreasing function such that $\B(t) = 0$ for $t \le 0$, $\B(t) > 0$ for $0 < t < 1$, and $\B(t) = 1$ for $t \ge 1$. For $\eps > 0$, let
\[
J_\eps(u) = \int_\Omega \left[\frac{1}{2}\, |\nabla u|^2 + \B\left(\frac{u - 1}{\eps}\right) - F(u - 1)\right] dx, \quad u \in H^1_0(\Omega)
\]
and note that $J_\eps$ is of class $C^1$. We will prove the following convergence result, which is of independent interest.

\begin{theorem} \label{Theorem 1.3}
Assume $(f_1)$ and $(f_2)$. Let $\eps_j \searrow 0$ and let $u_j$ be a critical point of $J_{\eps_j}$ at the level $c_j$. If $\seq{u_j}$ is bounded in $H^1_0(\Omega) \cap L^\infty(\Omega)$, then there exists a locally Lipschitz continuous function $u \in H^1_0(\Omega) \cap C(\closure{\Omega}) \cap C^1(\closure{\Omega} \setminus \bdry{\set{u > 1}}) \cap C^2(\Omega \setminus \bdry{\set{u > 1}})$ such that, for a renamed subsequence,
\begin{enumroman}
\item \label{Theorem 1.3.i} $u_j \to u$ uniformly on $\closure{\Omega}$,
\item \label{Theorem 1.3.ii} $u_j \to u$ strongly in $H^1_0(\Omega)$,
\item \label{Theorem 1.3.iii} $J(u) \le \liminf c_j \le \limsup c_j \le J(u) + \vol{\set{u = 1}}$, in particular, $u$ is nontrivial if $\limsup c_j > 0$.
\end{enumroman}
Moreover, $u$ satisfies the equation $- \Delta u = f(u - 1)$ in the classical sense in $\Omega \setminus \bdry{\set{u > 1}}$, the free boundary condition $|\nabla u^+|^2 - |\nabla u^-|^2 = 2$ in the generalized sense \eqref{1.2}, and vanishes on $\bdry{\Omega}$. If, in addition, $u$ is nondegenerate, then $u$ satisfies the free boundary condition in the viscosity sense and in a neighborhood of every regular point, the free boundary $\bdry{\set{u > 1}}$ is a $C^{1,\, \alpha}$-surface and $u$ satisfies the free boundary condition in the classical sense.
\end{theorem}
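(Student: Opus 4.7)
The plan is to extract the claimed convergence directly from the Euler--Lagrange equation
\[
-\Delta u_j = f(u_j - 1) - \frac{1}{\eps_j}\beta\!\left(\frac{u_j - 1}{\eps_j}\right),
\]
exploiting the sign and support structure of the regularizing term. Testing against $u_j$ and using that $\beta$ is supported where $u_j > 1$, together with the $L^\infty$ bound on $u_j$, yields the $L^1$ estimate $\int_\Omega \frac{1}{\eps_j}\beta((u_j-1)/\eps_j)\,dx \le C$. Combined with the $L^\infty$ bounds on $u_j$ and on $f(u_j - 1)$, the super/subharmonic comparison techniques of Jerison--Perera \cite{JePe2} then yield uniform local Lipschitz estimates on $u_j$ up to $\bdry{\Omega}$. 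Arzelà--Ascoli extracts a subsequence converging uniformly on $\closure{\Omega}$ to some $u \in C(\closure{\Omega})$, proving \ref{Theorem 1.3.i}; weak convergence $u_j \wto u$ in $H^1_0(\Omega)$ follows from the $H^1_0$-bound.

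For \ref{Theorem 1.3.ii}, I test the equation against $u_j - u$: the $f$-term vanishes in the limit by uniform convergence and $(f_2)$, while
\[
\left|\int_\Omega \frac{1}{\eps_j}\beta\!\left(\frac{u_j-1}{\eps_j}\right)(u_j - u)\,dx\right| \le \|u_j - u\|_\infty \cdot C \to 0
\]
by the $L^1$ bound above. Hence $\|\nabla u_j\|_2 \to \|\nabla u\|_2$, which with weak convergence gives strong $H^1_0$ convergence. For \ref{Theorem 1.3.iii}, the Dirichlet and $F$ terms pass to the limit by \ref{Theorem 1.3.ii} and uniform convergence. Since $\B((u_j - 1)/\eps_j) \to \goodchi_{\set{u > 1}}$ pointwise on $\set{u \ne 1}$ with $0 \le \B \le 1$, dominated convergence off $\set{u = 1}$ together with $0 \le \int_{\set{u = 1}} \B \le \vol{\set{u = 1}}$ yields
\[
\vol{\set{u > 1}} \le \liminf \int_\Omega \B\!\left(\frac{u_j - 1}{\eps_j}\right) dx \le \limsup \int_\Omega \B\!\left(\frac{u_j - 1}{\eps_j}\right) dx \le \vol{\set{u \ge 1}},
\]
giving the claimed sandwich for $c_j$.

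For the PDE and the generalized free boundary condition, on any compact $K \strictsubset \Omega \setminus \bdry{\set{u > 1}}$, uniform convergence forces the regularizing term to vanish on $K$ for $j$ large (treating the components where $u > 1$, $u < 1$, or $u \equiv 1$ separately), whence Schauder theory yields $-\Delta u = f(u - 1)$ classically on $K$. To derive \eqref{1.2}, I test the weak form against the vector field variation $\nabla u_j \cdot \varphi$ for $\varphi \in C^1_0(\Omega, \R^N)$ with $u \ne 1$ a.e.\ on $\supp \varphi$; using $\nabla[\B((u_j-1)/\eps_j)] = \frac{1}{\eps_j}\beta((u_j-1)/\eps_j)\nabla u_j$ and standard integration by parts, all bulk terms pass to the limit (strong $H^1_0$, uniform convergence, and dominated convergence for the $\B$ term) to give
\[
\int_\Omega (D\varphi\,\nabla u)\cdot\nabla u\, dx - \frac12\int_\Omega|\nabla u|^2\divg\varphi\, dx + \int_\Omega F(u-1)\divg\varphi\, dx = \int_{\set{u > 1}}\divg\varphi\, dx.
\]
Separately, applying a Pohozaev-type identity on $\set{u > 1 + \delta^+} \cup \set{u < 1 - \delta^-}$ (with smooth boundaries for a.a.\ $\delta^\pm > 0$ by Sard, as $u$ is $C^2$ away from $\bdry{\set{u > 1}}$) and using the local Lipschitz continuity of $u$ to control the bulk over the transition strip as $\delta^\pm \searrow 0$ produces the same left-hand side but right-hand side $\tfrac12\lim\int_{\set{u = 1 + \delta^+}}|\nabla u|^2\varphi\cdot n^+\,dS - \tfrac12\lim\int_{\set{u = 1 - \delta^-}}|\nabla u|^2\varphi\cdot n^-\,dS$. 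Combining the two identities, after rewriting $\int_{\set{u > 1}}\divg\varphi\, dx = \lim\int_{\set{u = 1 + \delta^+}}\varphi \cdot n^+\, dS$ via the divergence theorem, yields exactly \eqref{1.2}.

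The final assertions under nondegeneracy follow by a direct appeal to the regularity theorems of Lederman--Wolanski \cite{MR2281453}, whose hypotheses are precisely the classical PDE off the free boundary, the condition \eqref{1.2}, and nondegeneracy---all now established. I expect the hard step to be the uniform Lipschitz estimate of the first paragraph: because $\frac{1}{\eps_j}\beta((u_j-1)/\eps_j)$ is controlled only in $L^1$ and in no $L^p$ with $p > 1$, standard elliptic regularity cannot be invoked across the transition layer $\set{1 < u_j < 1 + \eps_j}$, and one must exploit the definite sign of the regularizing term through one-sided comparison, following the path laid out in \cite{JePe2}.
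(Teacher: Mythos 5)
Your overall architecture matches the paper's (uniform Lipschitz $\Rightarrow$ uniform convergence, then the PDE off the free boundary, then a Rellich--Pohozaev identity for \eqref{1.2}, then Lederman--Wolanski), and your argument for \ref{Theorem 1.3.ii} is actually a genuine and correct simplification: the $L^1$ bound $\int_\Omega \eps_j^{-1}\beta((u_j-1)/\eps_j)\,dx \le C$ (obtained by testing with $u_j$ and using $u_j\ge 1$ on the support of $\beta$) paired with $\norm[\infty]{u_j-u}\to 0$ kills the singular term when testing with $u_j-u$, whereas the paper tests with $u_j-1$ and must separately establish an energy identity for $u$ and control boundary terms $\partial u_j/\partial n$ on $\bdry{\Omega}$. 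However, there are two genuine gaps. First, the uniform Lipschitz estimate does not follow from the $L^1$ bound on $\eps_j^{-1}\beta((u_j-1)/\eps_j)$ plus one-sided comparison: an $L^1$ right-hand side gives no modulus of continuity at all for $N\ge 2$, and the sign of the regularizing term only yields $-\Delta u_j\le A_0$, i.e.\ a one-sided bound. What is actually needed, and what the paper invokes (Proposition \ref{Proposition 3.1}, Caffarelli--Jerison--Kenig), is the \emph{two-sided pointwise} bound $\pm\Delta u_j\le A\bigl(\eps_j^{-1}\goodchi_{\set{|u_j-1|<\eps_j}}+1\bigr)$, which holds because $0\le\beta\le 2\goodchi_{(-1,1)}$; this is the crucial ingredient and cannot be replaced by the mechanism you describe. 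Note also that this gives only \emph{interior} Lipschitz bounds, not ``up to $\bdry{\Omega}$''; the paper gets uniform convergence on $\closure{\Omega}$ by combining interior equicontinuity with the barrier $0\le u_j\le\varphi_0$, where $-\Delta\varphi_0=A_0$.

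Second, your claim that uniform convergence forces the regularizing term to vanish on compacta of $\Omega\setminus\bdry{\set{u>1}}$ for large $j$ is false precisely where it matters. On $K\strictsubset\interior{\set{u\le 1}}$ the limit $u$ may equal $1$ (on sets that are neither open nor avoidable, so your trichotomy ``$u>1$, $u<1$, $u\equiv 1$ by components'' is not exhaustive), and there $u_j$ can sit inside the transition layer $\set{1<u_j<1+\eps_j}$ for every $j$, so $\eps_j^{-1}\beta((u_j-1)/\eps_j)$ need not vanish and can carry mass in the limit. Proving that $u$ is harmonic in $\interior{\set{u\le 1}}$ is the delicate step: the paper first shows $u$ is harmonic in the open set $\set{u<1}$, then invokes Alt--Caffarelli to see that $\Delta(u-1)_-$ is a nonnegative Radon measure supported on $\Omega\cap\bdry{\set{u<1}}$, combines $0\le -\Delta u\le A_0$ on $\interior{\set{u\le 1}}$ to get $u\in W^{2,q}_\loc$ there, and only then concludes the measure is supported on $\bdry{\set{u<1}}\cap\bdry{\set{u>1}}$. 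Your proposal skips this entirely, and it is needed both for the classical PDE off the free boundary and (implicitly) for the $C^1$/$C^2$ regularity of $u$ away from $\bdry{\set{u>1}}$ that your Pohozaev argument for \eqref{1.2} relies on. The remainder (the sandwich for $c_j$, the derivation of \eqref{1.2} by comparing the limiting bulk identity with a Pohozaev identity for $u$ on $\set{u>1+\delta^+}\cup\set{u<1-\delta^-}$, and the appeal to Lederman--Wolanski) is a legitimate reorganization of the paper's computation.
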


Returning to problem \eqref{1.1}, we have
\[
J_\eps(u) = \int_\Omega \left[\frac{1}{2}\, |\nabla u|^2 + \B\left(\frac{u - 1}{\eps}\right) - \frac{\lambda}{2}\, (u - 1)_+^2 - \frac{\kappa}{2^\ast}\, (u - 1)_+^{2^\ast}\right] dx.
\]
Recall that $J_\eps$ satisfies the Palais-Smale compactness condition at the level $c \in \R$, or \PS{c} condition for short, if every sequence $\seq{u_j} \subset H^1_0(\Omega)$ such that $J_\eps(u_j) \to c$ and $J_\eps'(u_j) \to 0$, called a \PS{c} sequence, has a convergent subsequence. In order to obtain a critical point of $J_\eps$ by applying the mountain pass theorem, we will first show using the concentration compactness principle of Lions \cite{MR834360,MR850686} that given any $M > 0$, there exists $\kappa^\ast > 0$, independent of $\eps$, such that $J_\eps$ satisfies the \PS{c} condition for all $c \le M$ if $0 < \kappa < \kappa^\ast$. Let
\begin{equation} \label{1.4}
S = \inf_{u \in H^1_0(\Omega) \setminus \set{0}}\, \frac{\dint_\Omega |\nabla u|^2\, dx}{\left(\dint_\Omega |u|^{2^\ast}\, dx\right)^{2/2^\ast}}
\end{equation}
be the best Sobolev constant for the imbedding $H^1_0(\Omega) \hookrightarrow L^{2^\ast}(\Omega)$. We have the following proposition.

\begin{proposition} \label{Proposition 1.4}
Given $M > 0$, set
\[
\kappa^\ast = \left[\frac{S^{N/2}}{N(M + \vol{\Omega})}\right]^\frac{2}{N-2}.
\]
If $0 < \kappa < \kappa^\ast$, then $J_\eps$ satisfies the {\em \PS{c}} condition for all $c \le M$.
\end{proposition}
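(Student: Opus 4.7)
The plan is to combine the standard Palais--Smale machinery with Lions' second concentration-compactness principle, with the threshold $\kappa^\ast$ tuned precisely to preclude bubbles. Let $(u_j) \subset H^1_0(\Omega)$ be a PS$_c$-sequence with $c \le M$ and $J_\eps'(u_j) \to 0$ in $H^{-1}$. Testing $J_\eps'(u_j)$ against $u_j^-$ yields $\int|\nabla u_j^-|^2 = \o(\|u_j^-\|)$ (since $(u_j-1)_+$ and $\beta((u_j-1)/\eps)$ both vanish on $\{u_j \le 0\}$), so $u_j^- \to 0$ in $H^1_0(\Omega)$ and the eventual weak limit is nonnegative. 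The combination $2J_\eps(u_j) - \ip{J_\eps'(u_j)}{u_j}$, together with $\B \le 1$, $0 \le \beta \le 2$ and the fact that $1 < u_j < 1 + \eps$ on $\supp \beta((u_j-1)/\eps)$, yields an a priori bound first on $\int(u_j-1)_+^{2^\ast}$ and then on $\|u_j\|_{H^1_0}$ with constants depending on $c$, $\eps$ and $\Omega$. Along a subsequence I may assume $u_j \wto u$ in $H^1_0(\Omega)$, $u_j \to u$ a.e.\ and in $L^p(\Omega)$ for every $p < 2^\ast$, and that $u \ge 0$ is a critical point of $J_\eps$.

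Next I apply Lions' second concentration-compactness principle to $(u_j-1)_+ \in H^1_0(\Omega)$ to produce at most countably many $x_k \in \closure{\Omega}$ and nonnegative $\mu_k, \nu_k$ with $\mu_k \ge S\,\nu_k^{2/2^\ast}$, $|\nabla u_j|^2 \wstar |\nabla u|^2\,dx + \sum_k \mu_k\,\delta_{x_k}$ (up to a nonnegative singular piece), and $(u_j-1)_+^{2^\ast} \wstar (u-1)_+^{2^\ast}\,dx + \sum_k \nu_k\,\delta_{x_k}$. To identify the atoms I pick a standard cutoff $\phi_\delta \in C^\infty_c(B_{2\delta}(x_k))$ with $\phi_\delta \equiv 1$ on $B_\delta(x_k)$; since $\ip{J_\eps'(u_j)}{\phi_\delta u_j} = \o(1)$ and $(u_j-1)_+^{2^\ast-1} u_j = (u_j-1)_+^{2^\ast} + (u_j-1)_+^{2^\ast-1}$, sending $j \to \infty$ (the cross-term $\int u_j \nabla u_j \cdot \nabla\phi_\delta \to \int u\,\nabla u \cdot \nabla\phi_\delta$ by weak/strong duality) and then $\delta \to 0$ (using absolute continuity of $|u|^{2^\ast}$ and $|\nabla u|^2$ to kill the smooth parts) leaves the identity $\mu_k = \kappa\,\nu_k$. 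Combined with $\mu_k \ge S\,\nu_k^{2/2^\ast}$, every atom must satisfy $\nu_k = 0$ or $\nu_k \ge (S/\kappa)^{N/2}$.

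To conclude I derive the global energy identity. Brezis--Lieb on $(u_j-1)_+^{2^\ast}$ and on $|\nabla u_j|^2$, dominated convergence on the $\B$-term, strong $L^1$-convergence of the subcritical pieces, and the relations $\ip{J_\eps'(u_j)}{u_j} = \o(1)$ and $J_\eps'(u)u = 0$ together give
\[
\int|\nabla (u_j - u)|^2 = \kappa \int\bigl[(u_j - 1)_+ - (u-1)_+\bigr]^{2^\ast} + \o(1), \qquad c = J_\eps(u) + \tfrac{\kappa}{N} \sum_k \nu_k.
\]
The quantitative heart of the argument is the $\eps$-independent lower bound $J_\eps(u) \ge -\vol{\Omega}$: substituting $\int|\nabla u|^2$ from $J_\eps'(u)u = 0$ into $J_\eps(u)$ rewrites the energy as
\[
J_\eps(u) = \int_\Omega \Bigl[\B\Bigl(\tfrac{u-1}{\eps}\Bigr) - \tfrac{u}{2\eps}\,\beta\Bigl(\tfrac{u-1}{\eps}\Bigr)\Bigr] dx + \tfrac{\lambda}{2} \int(u-1)_+ + \tfrac{\kappa}{N} \int(u-1)_+^{2^\ast} + \tfrac{\kappa}{2} \int(u-1)_+^{2^\ast - 1},
\]
whose last three summands are nonnegative and whose first is controlled via $0 \le \beta \le 2$, the support properties of $\beta$, and the bound on $\tfrac{1}{\eps}\int\beta$ that follows from the Euler--Lagrange equation together with $u \ge 0$. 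Once $J_\eps(u) \ge -\vol{\Omega}$ is established, any bubble $\nu_{k_0} > 0$ forces
\[
c \ge -\vol{\Omega} + \tfrac{\kappa}{N}(S/\kappa)^{N/2} = -\vol{\Omega} + \tfrac{S^{N/2}}{N\,\kappa^{(N-2)/2}} > -\vol{\Omega} + (M + \vol{\Omega}) = M
\]
by the definition of $\kappa^\ast$, contradicting $c \le M$. Hence every $\nu_k$ vanishes, so $(u_j - 1)_+ \to (u-1)_+$ in $L^{2^\ast}(\Omega)$, and the Brezis--Lieb relation above yields $u_j \to u$ strongly in $H^1_0(\Omega)$. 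The main technical obstacle is precisely this $\eps$-independent lower bound on $J_\eps(u)$: the naive estimate of the $(\B,\beta)$-piece is of order $1/\eps$, and it is essential to absorb this via the positivity structure of the critical-point identity together with the specific normalizations of $\B$ and $\beta$.
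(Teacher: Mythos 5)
Your proposal is correct and follows the same overall strategy as the paper (boundedness of the PS sequence via the $J_\eps-\tfrac12 J_\eps'$ combination, Lions' concentration compactness applied to $(u_j-1)_+$, localized test functions to show $\mu_k\le\kappa\nu_k$ and hence $\nu_k\ge(S/\kappa)^{N/2}$ for any atom, and finally an energy bound that rules atoms out below the level $M$). The one place you genuinely diverge is the exclusion step. The paper's Lemma~4.1 is a two-line pointwise estimate of $J_\eps(u_j)-\tfrac12 J_\eps'(u_j)(u_j-1)_+$ --- pairing with $(u_j-1)_+$ rather than $u_j$ makes the penalization term harmless, since $\beta(t)\,t\le 2$ --- which gives $\tfrac{\kappa}{N}\int d\nu\le c+\vol{\Omega}$ directly in the limit and bounds the \emph{total} mass $\sum\nu_i$ without ever needing to know that the weak limit is a critical point. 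You instead establish the splitting $c=J_\eps(u)+\tfrac{\kappa}{N}\sum_k\nu_k$ via Brezis--Lieb and then prove $J_\eps(u)\ge-\vol{\Omega}$; because you pair with $u$ rather than $(u-1)_+$, you pick up the $\O(1/\eps)$ term $\tfrac{1}{2\eps}\int\beta$, which you must cancel against $\tfrac{\lambda}{2}\int(u-1)_++\tfrac{\kappa}{2}\int(u-1)_+^{2^\ast-1}$ using the integrated Euler--Lagrange equation and $u\ge0$. I checked that this cancellation is exact and your route closes with the same constant $\kappa^\ast$, but it costs you extra machinery (identification of $u$ as a critical point, nonnegativity of $u$, regularity up to the boundary or a truncation-test argument to integrate the equation, and the Brezis--Lieb lemma), all of which the paper's choice of test function renders unnecessary. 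The two final steps (strong $L^{2^\ast}$ convergence of $(u_j-1)_+$ implying strong $H^1_0$ convergence) are equivalent.
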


In order to apply Theorem \ref{Theorem 1.3}, we also need uniform $H^1$ and $L^\infty$ estimates for critical points of $J_\eps$ below a given level. If $u$ is a critical point of $J_\eps$ with $J_\eps(u) \le M$, then
\begin{equation} \label{1.5}
\int_\Omega (u^+)^{2^\ast} dx \le \frac{N(M + \vol{\Omega})}{\kappa}
\end{equation}
by Lemma \ref{Lemma 4.1} in Section \ref{Section 4}, which together with the H\"{o}lder inequality and $J_\eps(u) \le M$ gives a bound for $\norm{u}$ that depends only on $N$, $M$, $\vol{\Omega}$, $\kappa$ and $\lambda$. We will prove that there exists $\kappa_\ast > 0$, independent of $\eps$, such that critical points of $J_\eps$ in $\set{J_\eps(u) \le M}$ are also bounded in $L^\infty(\Omega)$ if $0 < \kappa < \kappa_\ast$. We have the following proposition.

\begin{proposition} \label{Proposition 1.5}
Given $M > 0$, set
\[
\kappa_\ast = \left(1 - \frac{4}{N^2}\right)^\frac{N}{N-2} \kappa^\ast,
\]
where $\kappa^\ast$ is as in Proposition \ref{Proposition 1.4}. If $0 < \kappa < \kappa_\ast$, then there exists a constant $C > 0$, depending on $M$, $\Omega$, $\kappa$ and $\lambda$, but not on $\eps > 0$, such that
\[
\norm[\infty]{u} \le C
\]
whenever $u$ is a critical point of $J_\eps$ with $J_\eps(u) \le M$.
\end{proposition}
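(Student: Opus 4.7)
The plan is to show that $u$ is a subsolution of a critical-growth elliptic inequality and then to run a Moser iteration whose ``first step'' requires a smallness condition that is exactly encoded by $\kappa<\kappa_\ast$. Since $\beta\ge 0$ and $(u-1)_+\le u^+$, the Euler--Lagrange equation for $J_\eps$,
\[
-\Delta u+\frac{1}{\eps}\,\beta\!\left(\frac{u-1}{\eps}\right)=\lambda(u-1)_++\kappa(u-1)_+^{2^\ast-1},
\]
gives the weak subsolution inequality $-\Delta u\le\lambda u^++\kappa(u^+)^{2^\ast-1}$ in $\Omega$, which is independent of $\eps$. From Lemma~\ref{Lemma 4.1} applied to $J_\eps(u)\le M$ we obtain $\|u^+\|_{2^\ast}^{2^\ast}\le N(M+\vol{\Omega})/\kappa$, hence
\[
\kappa\,\|u^+\|_{2^\ast}^{2^\ast-2}\le\kappa^{(N-2)/N}\,[N(M+\vol{\Omega})]^{2/N}.
\]
A direct comparison with the formula for $\kappa_\ast$ shows that the hypothesis $\kappa<\kappa_\ast$ is exactly the statement $\kappa\|u^+\|_{2^\ast}^{2^\ast-2}<(1-4/N^2)\,S$.

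For the critical Moser step, fix $s\in(1,N/(N-2))$, let $v=u^+$, $v_L=\min(v,L)$, and test the subsolution inequality with $\varphi=v\,v_L^{2(s-1)}\in H^1_0(\Omega)$. A direct computation of the left-hand side followed by the Sobolev inequality gives
\[
\int\nabla u\cdot\nabla\varphi\,dx\ge\frac{(2s-1)S}{s^2}\,\|\psi\|_{2^\ast}^2,\qquad \psi:=v\,v_L^{s-1},
\]
while on the right the critical term is estimated via H\"older with exponents $(N/2,N/(N-2))$:
\[
\kappa\int v^{2^\ast}v_L^{2(s-1)}\,dx=\kappa\int v^{2^\ast-2}\psi^2\,dx\le\kappa\,\|v\|_{2^\ast}^{2^\ast-2}\,\|\psi\|_{2^\ast}^2.
\]
Combining these yields
\[
\left[\frac{(2s-1)S}{s^2}-\kappa\|v\|_{2^\ast}^{2^\ast-2}\right]\|\psi\|_{2^\ast}^2\le\lambda\int v^{2s}\,dx.
\]
The bracket is positive precisely for $s\in(N/(N+2),N/(N-2))$, and the identity $(2s-1)/s^2=(N-2)(N+2)/N^2=1-4/N^2$ at $s=N/(N-2)=2^\ast/2$ is what forces the factor $(1-4/N^2)^{N/(N-2)}$ in the definition of $\kappa_\ast$. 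Choosing $s_0$ slightly below $N/(N-2)$ and letting $L\to\infty$ (monotone convergence on the right, Fatou on the left) yields $u^+\in L^{s_0\cdot 2^\ast}$ with a bound depending only on $M$, $\Omega$, $\lambda$, $\kappa$, $N$ and $S$, and in particular uniform in $\eps$.

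Once $u^+\in L^{q_0}$ for some $q_0>2^\ast$, the smallness condition on $\kappa$ is no longer needed: repeating the Moser iteration with the H\"older split on the critical term now taken with the subcritical conjugate pair $(q_0/(2^\ast-2),q_0/(q_0-2^\ast+2))$ produces, at each step, a strict improvement factor $2^\ast(q_0-2^\ast+2)/(2q_0)>1$ in the integrability exponent, yielding a geometric sequence $q_k\to\infty$ with uniformly controlled bounds. Once $q_k>N(2^\ast-1)/2$, the right-hand side of the subsolution inequality lies in some $L^p$ with $p>N/2$, and a final application of Stampacchia's bound (equivalently, Moser's $L^\infty$ estimate for subsolutions) gives $u^+\in L^\infty(\Omega)$ with constant depending only on $M$, $\Omega$, $\kappa$, $\lambda$. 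The main obstacle is the critical first step above; once one is past $L^{2^\ast}$, the remaining bootstrap to $L^\infty$ is routine.
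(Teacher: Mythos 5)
Your proposal is correct, and its decisive step coincides with the paper's. The paper also reduces matters to controlling the critical term via the uniform bound $\|u^+\|_{2^\ast}^{2^\ast}\le N(M+\vol{\Omega})/\kappa$ from Lemma \ref{Lemma 4.1}: it tests \eqref{4.5} with $u^{2^\ast-1}$ (i.e.\ your Moser step at the endpoint $s=N/(N-2)$, with no truncation needed since $u\in C^1(\closure{\Omega})$ for each fixed $\eps$), applies the Sobolev inequality to $w=u^{N/(N-2)}$ and H\"older to the critical term, and the factor $1-4/N^2=(2s-1)/s^2\big|_{s=N/(N-2)}$ enters through exactly the algebra you identify; the hypothesis $\kappa<\kappa_\ast$ is used, as in your argument, to make the resulting coefficient of $\norm[2^\ast]{w}^2$ positive. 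Where you diverge is in finishing: the paper does a single such step, obtaining $a:=\lambda+\kappa u^{4/(N-2)}\in L^{N^2/(2(N-2))}$ with $N^2/(2(N-2))>N/2$, and then invokes the cited $L^\infty$ estimate of Perera and Silva (Proposition \ref{Proposition 4.3}) as a black box, whereas you carry out the remaining subcritical Brezis--Kato/Moser bootstrap by hand. Both routes are valid; yours is more self-contained, the paper's is shorter. Two cosmetic points: the bracket $\frac{(2s-1)S}{s^2}-\kappa\|v\|_{2^\ast}^{2^\ast-2}$ is in fact positive on a slightly larger interval than $(N/(N+2),N/(N-2))$ because the inequality $\kappa\|v\|_{2^\ast}^{2^\ast-2}<(1-4/N^2)S$ is strict, and you should record that $s_0$ can be chosen depending only on $N$, $M$, $\vol{\Omega}$ and $\kappa$ (not on $u$) precisely because that gap is uniform over the class of critical points considered; both are immediate from what you wrote.
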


We would like to emphasize that the novelty here is that we extend the results of \cite{JePe2} to the critical exponent case, where the noncompactness of the Sobolev imbedding presents serious new difficulties, by combining the method introduced there with concentration compactness techniques. Many of the arguments in this paper are adapted from \cite{JePe2}. They are repeated here in detail for the sake of completeness and for the convenience of the reader.

In closing the introduction we note that problem \eqref{1.1} is also related to the well-studied Br{\'e}zis-Nirenberg problem
\[
\left\{\begin{aligned}
- \Delta u & = \lambda u + u^{2^\ast - 1} && \text{in } \Omega\\[10pt]
u & > 0 && \text{in } \Omega\\[10pt]
u & = 0 && \text{on } \bdry{\Omega}.
\end{aligned}\right.
\]
In a celebrated paper \cite{MR709644}, Br{\'e}zis and Nirenberg proved, among other things, that this problem has a solution if $0 < \lambda < \lambda_1$ and $N \ge 4$, and no solution if $\lambda \ge \lambda_1$. This pioneering work has stimulated a vast literature (see, e.g., \cite{MR779872,MR831041,MR829403,MR1009077,MR1083144,MR1154480,MR1306583,MR1473856,MR1441856,MR1491613,MR1695021,MR1784441,MR1961520} and the references therein).

\begin{notation}
Throughout the paper we write
\[
u = (u - 1)_+ + [1 - (u - 1)_-] =: u^+ + u^-.
\]
\end{notation}

\section{Nondegeneracy of ground states} \label{Section 2}

In this section we prove that ground states are nondegenerate and those that are at the level $c$ are mountain pass points of $J$, where $c$ is as in \eqref{1.3}. The manifold $\M$ is contained in the set
\[
U = \set{u \in H^1_0(\Omega) : u^\pm \ne 0}.
\]
For $u \in U$, consider the curve
\[
\zeta_u(s) = \begin{cases}
(1 + s)\, u^-, & s \in [-1,0]\\[5pt]
u^- + s\, u^+, & s \in (0,\infty),
\end{cases}
\]
which passes through $u$ at $s = 1$. For $s \in [-1,0]$,
\[
J(\zeta_u(s)) = \frac{(1 + s)^2}{2} \int_{\set{u < 1}} |\nabla u|^2\, dx
\]
is increasing in $s$. For $s \in (0,\infty)$,
\begin{multline} \label{2.1}
J(\zeta_u(s)) = \frac{1}{2} \int_{\set{u < 1}} |\nabla u|^2\, dx + \frac{s^2}{2} \int_{\set{u > 1}} \Big[|\nabla u|^2 - \lambda\, (u - 1)^2\Big]\, dx\\[7.5pt]
- \frac{\kappa s^{2^\ast}}{2^\ast} \int_{\set{u > 1}} (u - 1)^{2^\ast}\, dx + \vol{\set{u > 1}}.
\end{multline}
Hence
\[
\lim_{s \searrow 0}\, J(\zeta_u(s)) = J(\zeta_u(0)) + \vol{\set{u > 1}} > J(\zeta_u(0))
\]
and
\[
\frac{d}{ds}\, J(\zeta_u(s)) = s \left[\int_{\set{u > 1}} \Big[|\nabla u|^2 - \lambda\, (u - 1)^2\Big]\, dx - \kappa s^\frac{4}{N-2} \int_{\set{u > 1}} (u - 1)^{2^\ast}\, dx\right].
\]
Set
\[
s_u = \left[\frac{\dint_{\set{u > 1}} \Big[|\nabla u|^2 - \lambda\, (u - 1)^2\Big]\, dx}{\kappa \dint_{\set{u > 1}} (u - 1)^{2^\ast}\, dx}\right]^\frac{N-2}{4}.
\]
Then $J(\zeta_u(s))$ increases for $s \in (0,s_u)$, attains its maximum at $s = s_u$, decreases for $s \in (s_u,\infty)$, and
\begin{equation} \label{2.2}
\lim_{s \to \infty}\, J(\zeta_u(s)) = - \infty.
\end{equation}

\begin{proposition} \label{Proposition 2.1}
Let $c$ be as in \eqref{1.3}. Then
\begin{equation} \label{2.3}
c \le \inf_{u \in \M}\, J(u).
\end{equation}
If $u \in \M$ and $J(u) = c$, then $u$ is a mountain pass point of $J$. In particular, ground states at the level $c$ are mountain pass points of $J$.
\end{proposition}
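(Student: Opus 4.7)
The entire proof will be driven by the curve $\zeta_u$ already analyzed right before the statement. For $u \in \M$ the Nehari condition forces $s_u = 1$, so $J \circ \zeta_u$ is increasing on $[-1,0]$, has an \emph{upward} jump of size $\vol{\set{u>1}}$ at $s = 0^+$, continues to increase on $(0,1)$ to its global maximum $J(u)$ at $s = 1$, then decreases to $-\infty$ by \eqref{2.2}. Noting that $\zeta_u$ is norm-continuous as a map $[-1,\infty) \to H^1_0(\Omega)$ (the two branches agree at the value $u^-$ when $s = 0$), this will let me turn $\zeta_u$ into elements of $\Gamma$ and also use it to do surgery inside arbitrary neighborhoods of $u$.

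\textbf{Proof of \eqref{2.3}.} Given $u \in \M$, I pick $T > 1$ large enough that $J(\zeta_u(T)) < 0$, which is possible by \eqref{2.2}, and reparametrize $\zeta_u \restriction_{[-1,T]}$ onto $[0,1]$ to obtain a path $\gamma \in \Gamma$ (it is continuous into $H^1_0(\Omega)$, $\gamma(0) = 0$, $J(\gamma(1)) < 0$). The monotonicity analysis above shows that the supremum of $J$ along this path is exactly $J(u)$: on $[-1,0]$ the supremum is the endpoint value $J(\zeta_u(0)) = \tfrac{1}{2}\int_{\set{u<1}} |\nabla u|^2\, dx$, and on $(0,T]$ the supremum is attained at $s=1$ with value $J(u)$, which dominates $J(\zeta_u(0))$ because \eqref{2.1} at $s=1$ gives
\[
J(u) - J(\zeta_u(0)) = \vol{\set{u>1}} + \frac{\kappa}{N}\int_{\set{u>1}} (u-1)^{2^\ast}\, dx > 0.
\]
Hence $c \le J(u)$; taking the infimum over $u \in \M$ yields \eqref{2.3}.

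\textbf{Mountain pass property.} Now suppose $u \in \M$ with $J(u) = c$, and fix any neighborhood $V$ of $u$ in $H^1_0(\Omega)$. I need to show $\set{v \in V : J(v) < c}$ is neither empty nor path-connected. Nonemptiness is immediate: by continuity of $\zeta_u$, $\zeta_u(1+\delta) \in V$ for all small $\delta > 0$, and $J(\zeta_u(1+\delta)) < J(u) = c$ since $J \circ \zeta_u$ strictly decreases past $s=1$. For path-disconnectedness I argue by contradiction. Fix $\delta > 0$ small enough that $\zeta_u(1 \pm \delta) \in V$ (both with $J$-value $< c$), and assume a continuous path $\sigma : [0,1] \to V$ joins them while staying in $\set{J < c}$. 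I concatenate three pieces to form $\tilde\gamma$: $\zeta_u$ from $s = -1$ to $s = 1 - \delta$, then $\sigma$, then $\zeta_u$ from $s = 1 + \delta$ to $s = T$ (with $T$ as above). This is an element of $\Gamma$, and its $J$-supremum is bounded by the maximum of $J(\zeta_u(0))$, $J(\zeta_u(1-\delta))$, $\sup_\sigma J$, and $J(\zeta_u(1+\delta))$, all of which are strictly less than $c$: the first by the displayed identity above, the second and fourth because $J \circ \zeta_u$ is monotone on either side of its peak at $s = 1$ (and the peak is avoided), and the third by hypothesis. This contradicts the definition of $c$, completing the proof; the ``in particular'' assertion follows because by \eqref{2.3} any ground state $u$ satisfies $J(u) = \inf_{\M} J \ge c$, and if $J(u) = c$ the above applies.

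\textbf{Main obstacle.} The only subtlety is the upward jump of $J \circ \zeta_u$ at $s = 0^+$, caused by the characteristic-function term in $J$. Because of this discontinuity, one must verify that the ``left half'' of the path $\zeta_u$ does not secretly overtake the level $c$. The computation above shows $J(\zeta_u(0^+)) = J(\zeta_u(0)) + \vol{\set{u>1}} = c - \frac{\kappa}{N}\int_{\set{u>1}} (u-1)^{2^\ast}\, dx < c$, so the jump remains strictly below $c$ and the argument goes through. Everything else is a direct application of the monotonicity picture supplied in the text before the proposition.
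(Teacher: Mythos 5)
Your proposal is correct and follows essentially the same route as the paper: reparametrize $\zeta_u$ over $[-1,T]$ to get a path in $\Gamma$ whose maximum is $J(u)$ (giving \eqref{2.3}), then derive the mountain pass property by splicing a hypothetical connecting path in $\set{v \in V : J(v) < c}$ between $\zeta_u(1-\delta)$ and $\zeta_u(1+\delta)$ into this path to contradict the definition of $c$. Your explicit check that the jump of $J\circ\zeta_u$ at $s=0^+$ stays strictly below $c$ (by the amount $\frac{\kappa}{N}\int_{\set{u>1}}(u-1)^{2^\ast}\,dx>0$) is a worthwhile detail that the paper leaves implicit in its monotonicity discussion.
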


\begin{proof}
For each $u \in \M$, there exists $s_0 > 1$ such that $J(\zeta_u(s_0)) < 0$ by \eqref{2.2}. Then
\[
\gamma_u(t) = \zeta_u((s_0 + 1)\, t - 1), \quad t \in [0,1]
\]
defines a path $\gamma_u \in \Gamma$ such that
\[
\max_{v \in \gamma_u([0,1])}\, J(v) = J(u),
\]
so $c \le J(u)$. \eqref{2.3} follows.

Now suppose $J(u) = c$ and let $U$ be a neighborhood of $u$. The path $\gamma_u$ passes through $u$ at $t = 2/(s_0 + 1) =: t_0$ and $J(\gamma_u(t)) < c$ for $t \ne t_0$. By the continuity of $\gamma_u$, there exist $0 < t^- < t_0 < t^+ < 1$ such that $\gamma_u(t^\pm) \in U$, in particular, the set $\set{v \in U : J(v) < c}$ is nonempty. If it is path connected, then it contains a path $\eta$ joining $\gamma_u(t^\pm)$, and reparametrizing $\restr{\gamma_u}{[0,t^-]} \cup \eta \cup \restr{\gamma_u}{[t^+,1]}$ gives a path in $\Gamma$ on which $J < c$, contradicting the definition of $c$. So it is not path connected, and $u$ is a mountain pass point of $J$.
\end{proof}

For $u \in U$, $\zeta_u$ intersects $\M$ exactly at one point, namely, where $s = s_u$, and $s_u = 1$ if $u \in \M$. So we can define a (nonradial) continuous projection $\pi : U \to \M$ by
\[
\pi(u) = \zeta_u(s_u) = u^- + s_u\, u^+.
\]
Since
\[
s_u^2 \int_{\set{u > 1}} \Big[|\nabla u|^2 - \lambda\, (u - 1)^2\Big]\, dx = \kappa s_u^{2^\ast} \int_{\set{u > 1}} (u - 1)^{2^\ast}\, dx,
\]
we have
\begin{equation} \label{2.4}
J(\pi(u)) = \frac{1}{2} \int_{\set{u < 1}} |\nabla u|^2\, dx + \frac{s_u^2}{N} \int_{\set{u > 1}} \Big[|\nabla u|^2 - \lambda\, (u - 1)^2\Big]\, dx + \vol{\set{u > 1}}
\end{equation}
by \eqref{2.1}, in particular,
\begin{equation} \label{2.5}
J(u) = \frac{1}{2} \int_{\set{u < 1}} |\nabla u|^2\, dx + \frac{1}{N} \int_{\set{u > 1}} \Big[|\nabla u|^2 - \lambda\, (u - 1)^2\Big]\, dx + \vol{\set{u > 1}}
\end{equation}
for $u \in \M$. We are now ready to prove Theorem \ref{Theorem 1.1}.

\begin{proof}[Proof of Theorem \ref{Theorem 1.1}]
Let $x_0 \in \set{u > 1}$ and let $r := \dist{x_0}{\set{u \le 1}} \le 1$. Then let $y = (x - x_0)/r$, let $V = \set{y : x \in \Omega}$, and set
\[
v(y) = \frac{u(x) - 1}{r}, \quad y \in V.
\]
Since $u(x) > 1$ for all $x \in B_r(x_0)$ and there exists a point $x_1 \in \bdry{B_r(x_0)}$ such that $u(x_1) = 1$,
\begin{equation} \label{2.6}
0 < v(y) = \frac{u(x) - u(x_1)}{r} \le \frac{L\, |x - x_1|}{r} < 2L \quad \forall y \in B_1(0),
\end{equation}
where $L > 0$ is the Lipschitz coefficient of $u$ in $\set{u \ge 1}$. Then
\[
- \Delta v = \lambda r^2\, v + \kappa r^{2^\ast} v^{2^\ast - 1} \quad \text{in } B_1(0),
\]
so by the Harnack inequality there exists a constant $C > 0$ such that
\[
v(y) \le C\, (\alpha + r^2) \quad \forall y \in B_{2/3}(0),
\]
where
\[
\alpha = v(0) = \frac{u(x_0) - 1}{r}.
\]

Take a smooth cutoff function $\psi : B_1(0) \to [0,1]$ such that $\psi = 0$ in $\closure{B_{1/3}(0)}$, $0 < \psi < 1$ in $B_{2/3}(0) \setminus \closure{B_{1/3}(0)}$ and $\psi = 1$ in $B_1(0) \setminus B_{2/3}(0)$, let
\[
w(y) = \begin{cases}
\min \set{v(y),C\, (\alpha + r^2)\, \psi(y)}, & y \in B_{2/3}(0)\\[7.5pt]
v(y), & y \in V \setminus B_{2/3}(0),
\end{cases}
\]
and set $z(x) = r w(y) + 1$. Since $\set{z > 1} = \set{u > 1} \setminus \closure{B_{r/3}(x_0)}$, $z = 1$ in $\closure{B_{r/3}(x_0)}$, and $z = u$ outside $\D := \set{x \in B_{2r/3}(x_0) : v(y) \ge C\, (\alpha + r^2)\, \psi(y)}$,
\begin{eqnarray*}
s_z^\frac{4}{N-2} & = & \frac{\dint_{\set{z > 1}} \Big[|\nabla z|^2 - \lambda\, (z - 1)^2\Big]\, dx}{\kappa \dint_{\set{z > 1}} (z - 1)^{2^\ast}\, dx}\\[10pt]
& = & \frac{\dint_{\set{u > 1}} \Big[|\nabla z|^2 - \lambda\, (z - 1)^2\Big]\, dx}{\kappa \dint_{\set{u > 1}} (z - 1)^{2^\ast}\, dx}\\[10pt]
& = & \frac{\dint_{\set{u > 1} \setminus \D} \Big[|\nabla u|^2 - \lambda\, (u - 1)^2\Big]\, dx + \dint_\D \Big[|\nabla z|^2 - \lambda\, (z - 1)^2\Big]\, dx}{\kappa \left(\dint_{\set{u > 1} \setminus \D} (u - 1)^{2^\ast}\, dx + \dint_\D (z - 1)^{2^\ast}\, dx\right)}\\[10pt]
\end{eqnarray*}
\begin{eqnarray*}
\phantom{s_z^\frac{4}{N-2}} & \le & \frac{\dint_{\set{u > 1}} \Big[|\nabla u|^2 - \lambda\, (u - 1)^2\Big]\, dx + \dint_\D \Big[|\nabla z|^2 + \lambda\, (u - 1)^2\Big]\, dx}{\kappa \left(\dint_{\set{u > 1}} (u - 1)^{2^\ast}\, dx - \dint_\D (u - 1)^{2^\ast}\, dx\right)}\\[10pt]
& = & \frac{1 + \dfrac{\dint_\D \Big[|\nabla z|^2 + \lambda\, (u - 1)^2\Big]\, dx}{\kappa \dint_{\set{u > 1}} (u - 1)^{2^\ast}\, dx}}{1 - \dfrac{\dint_\D (u - 1)^{2^\ast}\, dx}{\dint_{\set{u > 1}} (u - 1)^{2^\ast}\, dx}}.
\end{eqnarray*}
We have
\begin{equation} \label{2.7}
\int_\D |\nabla z|^2\, dx = C^2\, (\alpha + r^2)^2\, r^N \int_{\set{y : x \in \D}} |\nabla \psi|^2\, dy = \O(r^N) \quad \text{as } r \to 0
\end{equation}
since $0 < \alpha < 2L$ by \eqref{2.6}. As in \eqref{2.6}, $0 < u - 1 < 2L\, r$ in $\D$, and $\vol{\D} = \O(r^N)$, so
\begin{equation} \label{2.8}
\int_\D (u - 1)^2\, dx = \O(r^{2+N}), \qquad \int_\D (u - 1)^{2^\ast}\, dx = \O(r^{2^\ast + N}).
\end{equation}
It follows that
\begin{equation} \label{2.9}
s_z^2 \le 1 + \frac{N - 2}{2 \kappa}\, \frac{\dint_\D |\nabla z|^2\, dx}{\dint_{\set{u > 1}} (u - 1)^{2^\ast}\, dx} + \O(r^{2+N}).
\end{equation}

Since $u$ is a minimizer of $\restr{J}{\M}$,
\[
J(u) \le J(\pi(z)).
\]
Since $z^- = u^-$, $\set{z > 1} = \set{u > 1} \setminus \closure{B_{r/3}(x_0)}$, and $z = 1$ in $\closure{B_{r/3}(x_0)}$, this inequality reduces to
\[
\frac{1}{N} \int_{\set{u > 1}} \Big[|\nabla u|^2 - \lambda\, (u - 1)^2\Big]\, dx + \vol{B_{r/3}(x_0)} \le \frac{s_z^2}{N} \int_{\set{u > 1}} \Big[|\nabla z|^2 - \lambda\, (z - 1)^2\Big]\, dx
\]
by \eqref{2.4} and \eqref{2.5}. Since $z = u$ in $\set{u > 1} \setminus \D$, this implies
\begin{eqnarray*}
N \vol{B_{1/3}(0)} r^N & \le & s_z^2 \left(\int_\D \Big[|\nabla z|^2 - \lambda\, (z - 1)^2\Big]\, dx + \int_{\set{u > 1} \setminus \D} \Big[|\nabla u|^2 - \lambda\, (u - 1)^2\Big]\, dx\right)\\[7.5pt]
& & - \int_{\set{u > 1}} \Big[|\nabla u|^2 - \lambda\, (u - 1)^2\Big]\, dx\\[10pt]
& = & s_z^2 \int_\D \Big[|\nabla z|^2 - \lambda\, (z - 1)^2 - |\nabla u|^2 + \lambda\, (u - 1)^2\Big]\, dx\\[7.5pt]
& & + \left(s_z^2 - 1\right) \int_{\set{u > 1}} \Big[|\nabla u|^2 - \lambda\, (u - 1)^2\Big]\, dx\\[10pt]
& \le & s_z^2 \int_\D \Big[|\nabla z|^2 + \lambda\, (u - 1)^2\Big]\, dx + \kappa \left(s_z^2 - 1\right) \int_{\set{u > 1}} (u - 1)^{2^\ast}\, dx\\[10pt]
& \le & \frac{N}{2} \int_\D |\nabla z|^2\, dx + \O(r^{2+N})
\end{eqnarray*}
by \eqref{2.8} and \eqref{2.9}. In view of the first equality in \eqref{2.7}, this gives $r_0,\, c > 0$ such that $r \le r_0$ implies $\alpha \ge c$, which is the desired conclusion.
\end{proof}

\section{A general convergence result}

In this section we prove Theorem \ref{Theorem 1.3}. If $u$ is a critical point of $J_\eps$, then $u$ is a weak solution of
\[
\left\{\begin{aligned}
- \Delta u & = - \frac{1}{\eps}\, \beta\left(\frac{u - 1}{\eps}\right) + f(u - 1) && \text{in } \Omega\\[10pt]
u & = 0 && \text{on } \bdry{\Omega},
\end{aligned}\right.
\]
and hence also a classical solution by elliptic regularity theory. By the maximum principle, $u \ge 0$ in $\Omega$. The crucial ingredient in the passage to the limit is the following uniform Lipschitz continuity result, proved in Caffarelli, Jerison, and Kenig \cite{MR1906591}.

\begin{proposition}[{\cite[Theorem 5.1]{MR1906591}}] \label{Proposition 3.1}
Suppose that $u$ is a Lipschitz continuous function on $B_1(0) \subset \R^N$ satisfying the distributional inequalities
\[
\pm \Delta u \le A \left(\frac{1}{\eps}\, \goodchi_{\set{|u - 1| < \eps}}(x) + 1\right)
\]
for some constants $A > 0$ and $0 < \eps \le 1$. Then there exists a constant $C > 0$, depending on $N$, $A$ and $\int_{B_1(0)} u^2\, dx$, but not on $\eps$, such that
\[
\max_{x \in B_{1/2}(0)}\, |\nabla u(x)| \le C.
\]
\end{proposition}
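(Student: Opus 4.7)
The proposition is an $\varepsilon$-independent interior gradient estimate, and my plan is to extract it from an Alt--Caffarelli--Friedman (ACF) type monotonicity formula, combined with De Giorgi iteration for a uniform $L^\infty$ bound and standard elliptic regularity away from the transition strip. Set $v^\pm = (u-1)_\pm$. The key structural observation is that, although $\Delta u$ contains a singular term of size $1/\varepsilon$ concentrated on the strip $\{|u-1|<\varepsilon\}$, the two one-sided functions $v^\pm$ are individually controllable: each is non-negative, they have disjoint supports, and one checks distributionally that $\Delta v^\pm \ge -A$ on $B_1(0)$ with an $\varepsilon$-independent constant. Intuitively, the singular contribution to $\Delta u$ is supported on the thin strip, where $v^\pm$ has already dropped to $O(\varepsilon)$; the Dirac-like concentration of $\Delta u$ on $\{u=1\}$ matches precisely the pointwise transition of $v^\pm$ to $0$, so no singular term survives in the distributional Laplacian of $v^\pm$. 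The first step of the plan is to verify this by testing against nonnegative $\varphi \in C_c^\infty(B_1(0))$ and combining the two-sided hypothesis on $\pm\Delta u$ with a chain-rule identity.

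The main step is a perturbed ACF monotonicity formula for the pair $(v^+,v^-)$ at an arbitrary base point $x_0 \in B_{1/2}(0)$. Following Caffarelli--Jerison--Kenig \cite{MR1906591}, I would show that
\[
\Phi_{x_0}(r) = \frac{1}{r^4}\left(\int_{B_r(x_0)}\frac{|\nabla v^+|^2}{|x-x_0|^{N-2}}\,dx\right)\left(\int_{B_r(x_0)}\frac{|\nabla v^-|^2}{|x-x_0|^{N-2}}\,dx\right)
\]
satisfies $\Phi_{x_0}(r) \le \Phi_{x_0}(r_0) + C(N,A)$ for $0 < r \le r_0 \le 1/4$. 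The scale-$r_0$ bound is controlled by $C(N,A,\int_{B_1(0)}u^2\,dx)$ via a Caccioppoli energy estimate together with a preliminary $L^\infty$ bound obtained from De Giorgi--Moser iteration (applied to $u$ truncated well above or well below the strip, where the singular RHS vanishes). Passing to the limit $r\searrow 0$ at a Lebesgue point yields the product bound $|\nabla v^+(x_0)|\cdot|\nabla v^-(x_0)| \le C$.

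The principal obstacle, and the most delicate part of the plan, is upgrading this two-phase product bound to an individual Lipschitz bound on $u$ itself, especially at points inside the approximate free boundary strip. Away from the strip one has $|\Delta u|\le A$ pointwise, so classical interior gradient estimates on sub-balls of comparable radius give $|\nabla u|\le C$ directly. For points $x_0 \in \{|u-1|<\varepsilon\}$, I would argue by contradiction and blow-up: suppose the estimate fails along a sequence $(u_n,\varepsilon_n)$ with $M_n := |\nabla u_n(x_n)|\to\infty$; rescale at $x_n$ by an appropriate factor so that the rescaled function $w_n$ has unit gradient at the origin and the rescaled strip width $\varepsilon_n M_n$ either shrinks to a hyperplane, stabilizes, or diverges. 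In each case the limit is a global two-phase solution satisfying the ACF jump condition $|\nabla v^+|^2 - |\nabla v^-|^2 = \textrm{const}$ in a classical or viscosity sense, which by the ACF regularity theory \cite{MR732100} must be globally Lipschitz — contradicting $|\nabla w_n(0)|=1$ failing to bound $M_n$. Making this blow-up argument rigorous, and in particular verifying non-triviality of the limit and the correct passage of the Dirac-like concentration to the sharp jump condition uniformly in $\varepsilon_n$, is the core technical difficulty that any complete proof must address.
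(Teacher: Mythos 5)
The first thing to say is that the paper does not prove this proposition: it is imported verbatim as Theorem 5.1 of Caffarelli--Jerison--Kenig \cite{MR1906591}, so your proposal is really an attempt to reconstruct that paper's main application. Your overall strategy (control the two phases by an ACF-type product functional, then convert to a pointwise gradient bound) is indeed the CJK strategy, but two steps fail as written. The claim that $\Delta (u-1)_\pm \ge -A$ with ``no singular term surviving'' is false: the singular right-hand side lives on the full strip $\{|u-1|<\eps\}$, which meets the open sets $\{u>1\}$ and $\{u<1\}$, not just the level set $\{u=1\}$. Concretely, $u = 1+\eps\sin(x_1/\eps)$ satisfies the hypotheses with $A=1$, yet $\Delta (u-1)_+ = -\eps^{-1}\sin(x_1/\eps)$ on $\{0<u-1<\eps\}$, which is of size $-\eps^{-1}$ there. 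The correct phases are $(u-1-\eps)_+$ and $(1-\eps-u)_+$, whose supports avoid the strip, so that $\Delta u\ge -A$ on them and taking positive parts only adds a nonnegative boundary measure. Relatedly, the classical ACF formula of \cite{MR732100} requires subharmonic phases; for phases satisfying only $\Delta v\ge -A$ one needs precisely the almost-monotonicity formula that is the main theorem of \cite{MR1906591}, so that step cannot be invoked as a known black box independent of the result being proved.

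The second and more serious gap is the passage from the product bound $|\nabla v^+|\,|\nabla v^-|\le C$ to $|\nabla u|\le C$, which is the heart of the theorem and which neither of your routes closes. ``Away from the strip'' the interior estimate on $B_d(x_0)$, with $d$ the distance to the strip, gives $|\nabla u(x_0)|\le C d^{-1}\sup_{B_d(x_0)}|u-1|+CAd$, and bounding $d^{-1}\sup_{B_d(x_0)}|u-1|$ uniformly as $d\to 0$ is exactly the Lipschitz estimate again, so nothing there is ``direct.'' The blow-up alternative is circular as sketched: extracting a nontrivial limit requires uniform local Lipschitz compactness, which is the conclusion of the proposition, and a limit that is ``globally Lipschitz'' yields no contradiction with $M_n\to\infty$ after you have already normalized $|\nabla w_n(0)|=1$. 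The actual argument in \cite{MR1906591} is a direct dyadic dichotomy/iteration at points of $\partial\{u>1+\eps\}$, using the almost-monotonicity bound to show that at most one phase can carry large averaged gradient at each scale. Since the paper (reasonably) cites this result rather than reproving it, you should either do the same or follow the CJK argument in full; the proposal as it stands does not constitute a proof.
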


Let $\eps_j$ and $u_j$ be as in Theorem \ref{Theorem 1.3}. We may assume that $0 < \eps_j \le 1$. Since $\seq{u_j}$ is bounded in $L^\infty(\Omega)$,
\begin{equation} \label{3.1}
|f(u_j(x) - 1)| \le A_0 \quad \text{for a.a.\! } x \in \Omega
\end{equation}
for some constant $A_0 > 0$ by $(f_1)$ and $(f_2)$. Then
\[
\pm \Delta u_j = \pm \frac{1}{\eps_j}\, \beta\left(\frac{u_j - 1}{\eps_j}\right) \mp f(u_j - 1) \le \frac{2}{\eps_j}\, \goodchi_{\set{|u_j - 1| < \eps_j}}(x) + A_0
\]
since $0 \le \beta \le 2\, \goodchi_{(-1,1)}$. Since $\seq{u_j}$ is also bounded in $L^2(\Omega)$, it follows from Proposition \ref{Proposition 3.1} that for each $r > 0$, there exists a constant $C(r) > 0$ such that
\begin{equation} \label{3.2}
\max_{x \in B_{r/2}(x_0)}\, |\nabla u_j(x)| \le C(r)
\end{equation}
whenever $B_r(x_0) \subset \Omega$. We are now ready to prove Theorem \ref{Theorem 1.3}.

\begin{proof}[Proof of Theorem \ref{Theorem 1.3}]
Since $\seq{u_j}$ is bounded in $H^1_0(\Omega)$, a renamed subsequence converges weakly in $H^1_0(\Omega)$ to some $u$. Since $\seq{u_j}$ is bounded in $L^\infty(\Omega)$ and uniformly locally Lipschitz continuous by \eqref{3.2}, a standard diagonalization argument shows that $u_j \to u$ uniformly on compact subsets of $\Omega$ for a further subsequence, and $u$ is locally Lipschitz continuous. Let $\varphi_0 > 0$ be the solution of
\[
\left\{\begin{aligned}
- \Delta \varphi_0 & = A_0 && \text{in } \Omega\\[10pt]
\varphi_0 & = 0 && \text{on } \bdry{\Omega},
\end{aligned}\right.
\]
where $A_0$ is as in \eqref{3.1}. Since
\begin{equation} \label{3.3}
- \Delta u_j \le A_0 \quad \text{in } \Omega,
\end{equation}
we have
\begin{equation} \label{3.4}
0 \le u_j(x) \le \varphi_0(x) \quad \forall x \in \Omega
\end{equation}
by the maximum principle, so $0 \le u \le \varphi_0$. Hence $|u_j - u| \le \varphi_0$. It follows that $u$ continuously extends to $\closure{\Omega}$ with zero boundary values and $u_j \to u$ uniformly on $\closure{\Omega}$.

Next we show that $u$ satisfies the equation $- \Delta u = f(u - 1)$ in $\set{u \ne 1}$. Let $\varphi \in C^\infty_0(\set{u > 1})$. Then $u \ge 1 + 2\, \eps$ on the support of $\varphi$ for some $\eps > 0$. For all sufficiently large $j$, $\eps_j < \eps$ and $|u_j - u| < \eps$ in $\Omega$. Then $u_j \ge 1 + \eps_j$ on the support of $\varphi$, so testing
\begin{equation} \label{3.5}
- \Delta u_j = - \frac{1}{\eps_j}\, \beta\left(\frac{u_j - 1}{\eps_j}\right) + f(u_j - 1)
\end{equation}
with $\varphi$ gives
\[
\int_\Omega \nabla u_j \cdot \nabla \varphi\, dx = \int_\Omega f(u_j - 1)\, \varphi\, dx.
\]
Since $u_j \to u$ weakly in $H^1_0(\Omega)$ and uniformly on $\Omega$, passing to the limit gives
\begin{equation} \label{3.6}
\int_\Omega \nabla u \cdot \nabla \varphi\, dx = \int_\Omega f(u - 1)\, \varphi\, dx.
\end{equation}
This then holds for all $\varphi \in H^1_0(\set{u > 1})$ by density, and hence $u$ is a classical solution of $- \Delta u = f(u - 1)$ in $\set{u > 1}$. A similar argument shows that $u$ is harmonic in $\set{u < 1}$.

Now we show that $u$ is harmonic in $\interior{\set{u \le 1}}$. Testing \eqref{3.3} with any nonnegative $\varphi \in C^\infty_0(\Omega)$ gives
\[
\int_\Omega \nabla u_j \cdot \nabla \varphi\, dx \le A_0 \int_\Omega \varphi\, dx,
\]
and passing to the limit gives
\[
\int_\Omega \nabla u \cdot \nabla \varphi\, dx \le A_0 \int_\Omega \varphi\, dx,
\]
so
\begin{equation} \label{3.7}
- \Delta u \le A_0 \quad \text{in } \Omega
\end{equation}
in the weak sense. On the other hand, since $u$ is harmonic in $\set{u < 1}$, $\kappa := \Delta (u - 1)_-$ is a nonnegative Radon measure supported on $\Omega \cap \bdry{\set{u < 1}}$ by Alt and Caffarelli \cite[Remark 4.2]{MR618549}, so
\begin{equation} \label{3.8}
- \Delta u = \kappa \ge 0 \quad \text{in } \set{u \le 1}.
\end{equation}
It follows from \eqref{3.7} and \eqref{3.8} that $u \in W^{2,\, q}_\loc(\interior{\set{u \le 1}}),\, 1 < q < \infty$ and hence $\kappa$ is actually supported on $\Omega \cap \bdry{\set{u < 1}} \cap \bdry{\set{u > 1}}$, so $u$ is harmonic in $\interior{\set{u \le 1}}$.

We are now ready to prove \ref{Theorem 1.3.ii}. By standard regularity arguments, $u_j \in C^1(\closure{\Omega})$, $u \in C^1(\closure{\Omega} \setminus \bdry{\set{u > 1}})$, and $\partial u_j/\partial n \to \partial u/\partial n$ on $\bdry{\Omega}$, where $n$ is the outward unit normal. Multiplying \eqref{3.5} by $u_j - 1$, integrating by parts, and noting that $\beta((t - 1)/\eps_j)\, (t - 1) \ge 0$ for all $t$ gives
\begin{multline} \label{3.9}
\int_\Omega |\nabla u_j|^2\, dx \le \int_\Omega f(u_j - 1)\, (u_j - 1)\, dx - \int_{\bdry{\Omega}} \frac{\partial u_j}{\partial n}\, dS\\[7.5pt]
\to \int_\Omega f(u - 1)\, (u - 1)\, dx - \int_{\bdry{\Omega}} \frac{\partial u}{\partial n}\, dS
\end{multline}
since $\partial u_j/\partial n \to \partial u/\partial n$ pointwise and $\partial \varphi_0/\partial n \le \partial u_j/\partial n \le 0$ by \eqref{3.4}. Fix $0 < \eps < 1$. Taking $\varphi = (u - 1 - \eps)_+$ in \eqref{3.6} gives
\begin{equation} \label{3.10}
\int_{\set{u > 1 + \eps}} |\nabla u|^2\, dx = \int_\Omega f(u - 1)\, (u - 1 - \eps)_+\, dx,
\end{equation}
and integrating $(u - 1 + \eps)_-\, \Delta u = 0$ over $\Omega$ gives
\begin{equation} \label{3.11}
\int_{\set{u < 1 - \eps}} |\nabla u|^2\, dx = - (1 - \eps) \int_{\bdry{\Omega}} \frac{\partial u}{\partial n}\, dS.
\end{equation}
Adding \eqref{3.10} and \eqref{3.11}, and letting $\eps \searrow 0$ gives
\[
\int_\Omega |\nabla u|^2\, dx = \int_\Omega f(u - 1)\, (u - 1)\, dx - \int_{\bdry{\Omega}} \frac{\partial u}{\partial n}\, dS,
\]
which together with \eqref{3.9} gives
\[
\limsup_{j \to \infty}\, \int_\Omega |\nabla u_j|^2\, dx \le \int_\Omega |\nabla u|^2\, dx.
\]
Since $u_j \rightharpoonup u$ in $H^1_0(\Omega)$ and hence $\norm{u} \le \liminf \norm{u_j}$, it follows that $\norm{u_j} \to \norm{u}$, so $u_j \to u$ strongly in $H^1_0(\Omega)$.

To prove \ref{Theorem 1.3.iii}, write
\begin{multline*}
J_{\eps_j}(u_j) = \int_\Omega \left[\frac{1}{2}\, |\nabla u_j|^2 + \B\left(\frac{u_j - 1}{\eps_j}\right) \goodchi_{\set{u \ne 1}}(x) - F(u_j - 1)\right] dx\\[7.5pt]
+ \int_{\set{u = 1}} \B\left(\frac{u_j - 1}{\eps_j}\right) dx.
\end{multline*}
Since $\B((u_j - 1)/\eps_j)\, \goodchi_{\set{u \ne 1}}$ converges pointwise to $\goodchi_{\set{u > 1}}$ and is bounded by $1$, the first integral converges to $J(u)$ by \ref{Theorem 1.3.i} and \ref{Theorem 1.3.ii}, and
\[
0 \le \int_{\set{u = 1}} \B\left(\frac{u_j - 1}{\eps_j}\right) dx \le \vol{\set{u = 1}}
\]
since $0 \le \B \le 1$, so \ref{Theorem 1.3.iii} follows.

\newpage

Now we show that $u$ satisfies the generalized free boundary condition, i.e., for all $\varphi \in C^1_0(\Omega,\R^N)$ such that $u \ne 1$ a.e.\! on the support of $\varphi$,
\begin{equation} \label{3.12}
\lim_{\delta^+ \searrow 0}\, \int_{\set{u = 1 + \delta^+}} \left(2 - |\nabla u|^2\right) \varphi \cdot n\, dS - \lim_{\delta^- \searrow 0}\, \int_{\set{u = 1 - \delta^-}} |\nabla u|^2\, \varphi \cdot n\, dS = 0,
\end{equation}
where $n$ is the outward unit normal to $\set{1 - \delta^- < u < 1 + \delta^+}$. Multiplying \eqref{3.5} by $\nabla u_j \cdot \varphi$ and integrating over $\set{1 - \delta^- < u < 1 + \delta^+}$ gives
\begin{eqnarray*}
0 & = & \int_{\set{1 - \delta^- < u < 1 + \delta^+}} \left[- \Delta u_j + \frac{1}{\eps_j}\, \beta\left(\frac{u_j - 1}{\eps_j}\right) - f(u_j - 1)\right] \nabla u_j \cdot \varphi\, dx\\[10pt]
& = & \int_{\set{1 - \delta^- < u < 1 + \delta^+}} \bigg[\divg \left(\frac{1}{2}\, |\nabla u_j|^2\, \varphi - (\nabla u_j \cdot \varphi)\, \nabla u_j\right) + \nabla u_j\, D\varphi \cdot \nabla u_j\\[10pt]
&& - \frac{1}{2}\, |\nabla u_j|^2\, \divg \varphi + \nabla \B\left(\frac{u_j - 1}{\eps_j}\right) \cdot \varphi - \nabla F(u_j - 1) \cdot \varphi\bigg]\, dx\\[10pt]
& = & \frac{1}{2} \int_{\set{u = 1 + \delta^+} \bigcup \set{u = 1 - \delta^-}} \left[|\nabla u_j|^2\, \varphi - 2\, (\nabla u_j \cdot \varphi)\, \nabla u_j + 2\, \B\left(\frac{u_j - 1}{\eps_j}\right) \varphi\right] \cdot n\, dS\\[10pt]
&& - \int_{\set{u = 1 + \delta^+} \bigcup \set{u = 1 - \delta^-}} F(u_j - 1)\, \varphi \cdot n\, dS\\[10pt]
&& - \int_{\set{1 - \delta^- < u < 1 + \delta^+}} \left[\B\left(\frac{u_j - 1}{\eps_j}\right) - F(u_j - 1)\right] \divg \varphi\, dx\\[10pt]
&& + \int_{\set{1 - \delta^- < u < 1 + \delta^+}} \left(\nabla u_j\, D\varphi \cdot \nabla u_j - \frac{1}{2}\, |\nabla u_j|^2\, \divg \varphi\right) dx\\[10pt]
& =: & \frac{I_1}{2} - I_2 - I_3 + I_4.
\end{eqnarray*}
Since $u_j \to u$ uniformly on $\closure{\Omega}$, strongly in $H^1_0(\Omega)$, and locally in $C^1(\closure{\Omega} \setminus \bdry{\set{u > 1}})$,
\begin{eqnarray*}
I_1 & \to & \int_{\set{u = 1 + \delta^+} \bigcup \set{u = 1 - \delta^-}} \left(|\nabla u|^2\, \varphi - 2\, (\nabla u \cdot \varphi)\, \nabla u\right) \cdot n\, dS + \int_{\set{u = 1 + \delta^+}} 2\, \varphi \cdot n\, dS\\[10pt]
& = & \int_{\set{u = 1 + \delta^+}} \left(2 - |\nabla u|^2\right) \varphi \cdot n\, dS - \int_{\set{u = 1 - \delta^-}} |\nabla u|^2\, \varphi \cdot n\, dS
\end{eqnarray*}
since $n = \pm \nabla u/|\nabla u|$ on $\set{u = 1 \pm \delta^\pm}$, and
\[
I_2 \to \int_{\set{u = 1 + \delta^+}} F(u - 1)\, \varphi \cdot n\, dS = F(\delta^+) \int_{\set{u = 1 + \delta^+}} \varphi \cdot n\, dS = F(\delta^+) \int_{\set{u < 1 + \delta^+}} \divg \varphi\, dx,
\]
which goes to zero as $\delta^+ \searrow 0$ by $(f_2)$. Since $0 \le \B((u_j - 1)/\eps_j) \le 1$,
\begin{eqnarray*}
|I_3| & \le & \int_{\set{1 - \delta^- < u < 1 + \delta^+}} \big(1 + |F(u_j - 1)|\big)\, |\divg \varphi|\, dx\\[10pt]
& \to & \int_{\set{1 - \delta^- < u < 1 + \delta^+}} \big(1 + |F(u - 1)|\big)\, |\divg \varphi|\, dx,
\end{eqnarray*}
and
\[
I_4 \to \int_{\set{1 - \delta^- < u < 1 + \delta^+}} \left(\nabla u\, D\varphi \cdot \nabla u - \frac{1}{2}\, |\nabla u|^2\, \divg \varphi\right) dx.
\]
The last two integrals go to zero as $\delta^\pm \searrow 0$ since $\vol{\set{u = 1} \cap \supp \varphi} = 0$, so first letting $j \to \infty$ and then letting $\delta^\pm \searrow 0$ gives \eqref{3.12}.

Since $f(u_j - 1) \wstar f(u - 1)$ in $L^\infty(\Omega)$, the rest follows from Corollaries 7.1, 7.2 and Theorem 9.2 of Lederman and Wolanski \cite{MR2281453}.
\end{proof}

\section{Compactness and $L^\infty$ estimates} \label{Section 4}

In this section we prove Propositions \ref{Proposition 1.4} and \ref{Proposition 1.5}. We have
\[
J_\eps(u) = \int_\Omega \left[\frac{1}{2} \left(|\nabla u^+|^2 + |\nabla u^-|^2\right) + \B\left(\frac{u^+}{\eps}\right) - \frac{\lambda}{2}\, (u^+)^2 - \frac{\kappa}{2^\ast}\, (u^+)^{2^\ast}\right] dx
\]
and
\[
J_\eps'(u)\, v = \int_\Omega \left[\left(\nabla u^+ + \nabla u^-\right) \cdot \nabla v + \beta\left(\frac{u^+}{\eps}\right) \frac{v}{\eps} - \lambda\, u^+ v - \kappa\, (u^+)^{2^\ast - 1} v\right] dx, \quad v \in H^1_0(\Omega).
\]
We will make repeated use of the following simple lemma.

\begin{lemma} \label{Lemma 4.1}
For all $u \in H^1_0(\Omega)$,
\[
\frac{\kappa}{N} \int_\Omega (u^+)^{2^\ast} dx \le J_\eps(u) - \frac{1}{2}\, J_\eps'(u)\, u^+ + \vol{\Omega}.
\]
\end{lemma}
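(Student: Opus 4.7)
The plan is to write out both sides explicitly and reduce the claim to a pointwise estimate on the regularizer. Write $v = u^+$ for brevity and use the stated formulas
\[
J_\eps(u) = \int_\Omega \left[\tfrac{1}{2}|\nabla u^+|^2 + \tfrac{1}{2}|\nabla u^-|^2 + \B\!\left(\tfrac{v}{\eps}\right) - \tfrac{\lambda}{2}\,v^2 - \tfrac{\kappa}{2^\ast}\,v^{2^\ast}\right] dx
\]
and, taking the test function $u^+$ in the formula for $J_\eps'(u)$,
\[
J_\eps'(u)\,u^+ = \int_\Omega \left[|\nabla u^+|^2 + \beta\!\left(\tfrac{v}{\eps}\right)\tfrac{v}{\eps} - \lambda\,v^2 - \kappa\,v^{2^\ast}\right] dx,
\]
where the cross term $\int_\Omega \nabla u^- \cdot \nabla u^+\,dx$ vanishes because $\nabla u^+ = \goodchi_{\set{u>1}} \nabla u$ and $\nabla u^- = \goodchi_{\set{u<1}} \nabla u$ have disjoint supports a.e.

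Subtracting, the $\frac{1}{2}|\nabla u^+|^2$ and $\frac{\lambda}{2}\,v^2$ terms cancel, and the critical term produces coefficient $\kappa\bigl(\frac{1}{2} - \frac{1}{2^\ast}\bigr) = \frac{\kappa}{N}$. Thus
\[
J_\eps(u) - \tfrac{1}{2}\,J_\eps'(u)\,u^+ = \int_\Omega \tfrac{1}{2}|\nabla u^-|^2\,dx + \int_\Omega \left[\B\!\left(\tfrac{v}{\eps}\right) - \tfrac{1}{2}\,\beta\!\left(\tfrac{v}{\eps}\right)\tfrac{v}{\eps}\right] dx + \tfrac{\kappa}{N}\int_\Omega v^{2^\ast} dx.
\]
Rearranging and dropping the nonnegative $|\nabla u^-|^2$ term yields
\[
\tfrac{\kappa}{N}\int_\Omega (u^+)^{2^\ast}\,dx \le J_\eps(u) - \tfrac{1}{2}\,J_\eps'(u)\,u^+ + \int_\Omega \left[\tfrac{1}{2}\,\beta\!\left(\tfrac{v}{\eps}\right)\tfrac{v}{\eps} - \B\!\left(\tfrac{v}{\eps}\right)\right] dx.
\]

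It remains to show the last integral is bounded by $\vol{\Omega}$. For this I would establish the pointwise bound $\frac{1}{2}\,\beta(t)\,t - \B(t) \le \goodchi_{(0,1)}(t)$ for all $t \in \R$: on $t \le 0$ both sides vanish; on $0 < t < 1$ the bound $\beta \le 2$ gives $\frac{1}{2}\beta(t)t \le t < 1$, while $\B(t) \ge 0$; on $t \ge 1$, $\beta(t) = 0$ and $\B(t) = 1$, so the integrand equals $-1 \le 0$. Applying this with $t = v/\eps$ shows the final integral is at most $\vol{\set{1 < u < 1+\eps}} \le \vol{\Omega}$, which completes the proof.

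There is no real obstacle here; the only point requiring a moment's thought is the orthogonality $\nabla u^+ \cdot \nabla u^- = 0$ that makes the kinetic terms separate cleanly, and the elementary pointwise estimate on $\frac{1}{2}\beta(t)t - \B(t)$ using the properties $0 \le \beta \le 2$, $\supp \beta \subset [0,1]$, and $\B(1) = 1$.
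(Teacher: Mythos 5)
Your proposal is correct and follows essentially the same route as the paper: the paper likewise expands $J_\eps(u) - \tfrac12 J_\eps'(u)\,u^+$, cancels the gradient and $\lambda$-terms, and controls the regularizer by the pointwise bounds $\B \ge 0$ and $\beta(t)\,t \le 2$, which is exactly your estimate $\tfrac12\beta(t)t - \B(t) \le 1$ on $(0,1)$. The only cosmetic difference is that you first record the exact identity and then apply a single combined pointwise bound, whereas the paper bounds the two functionals separately from the outset.
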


\begin{proof}
Since $\B(t) \ge 0$ for all $t$,
\[
J_\eps(u) \ge \int_\Omega \left[\frac{1}{2}\, |\nabla u^+|^2 - \frac{\lambda}{2}\, (u^+)^2 - \frac{\kappa}{2^\ast}\, (u^+)^{2^\ast}\right] dx,
\]
and since $\beta(t)\, t \le 2$ for all $t$,
\[
J_\eps'(u)\, u^+ \le \int_\Omega \left[|\nabla u^+|^2 - \lambda\, (u^+)^2 - \kappa\, (u^+)^{2^\ast}\right] dx + 2 \vol{\Omega}. \QED
\]
\end{proof}

In the absence of a compact Sobolev imbedding, the main technical tool we use here to prove the \PS{} condition is the following concentration compactness principle of Lions \cite{MR834360,MR850686}.

\begin{proposition} \label{Proposition 4.2}
Let $\seq{u_j}$ be a sequence in $H^1_0(\Omega)$ such that $u_j \wto u \in H^1_0(\Omega)$ and $|\nabla u_j|^2\, dx \wstar \mu,\, |u_j|^{2^\ast}\, dx \wstar \nu$ in the sense of measures, where $\mu$ and $\nu$ are bounded nonnegative measures on $\closure{\Omega}$. Then there exist an at most countable index set $I$ and points $x_i \in \closure{\Omega},\, i \in I$ such that
\begin{enumroman}
\item $\nu = |u|^{2^\ast} dx + \sum_{i \in I} \nu_i\, \delta_{x_i}$, where each $\nu_i > 0$,
\item $\mu \ge |\nabla u|^2\, dx + \sum_{i \in I} \mu_i\, \delta_{x_i}$, where each $\mu_i > 0$ satisfies $(\nu_i)^{2/2^\ast} \le \mu_i/S$, in particular, $\sum_{i \in I} (\nu_i)^{2/2^\ast} < \infty$.
\end{enumroman}
\end{proposition}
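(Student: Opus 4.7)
The plan is to follow Lions' original strategy. Passing to the weakly-null defect sequence $v_j := u_j - u$, I first derive from Sobolev's inequality (applied to the localization $\varphi v_j$) a reverse-Sobolev-type estimate between the defect measures, and then invoke a measure-theoretic fact that forces the $\nu$-defect to be purely atomic. By Rellich--Kondrachov, along a further subsequence $v_j \to 0$ pointwise a.e.\! and strongly in $L^2_{\loc}$, and by the Brezis--Lieb lemma together with the expansion $|\nabla u_j|^2 = |\nabla v_j|^2 + 2\nabla v_j \cdot \nabla u + |\nabla u|^2$ (the cross term vanishing in the weak-$*$ limit because $\nabla v_j \wto 0$ in $L^2$),
\[
\tilde\nu := \nu - |u|^{2^\ast}\, dx \quad \text{and} \quad \tilde\mu := \mu - |\nabla u|^2\, dx
\]
are nonnegative finite Radon measures on $\closure{\Omega}$, realized respectively as the weak-$*$ limits of $|v_j|^{2^\ast}\, dx$ and $|\nabla v_j|^2\, dx$. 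In particular, $\mu \ge |\nabla u|^2\, dx$, and it suffices to show that $\tilde\nu$ is a countable sum of Dirac masses $\sum_i \nu_i\, \delta_{x_i}$ with $S \nu_i^{2/2^\ast} \le \tilde\mu(\set{x_i})$.

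Next, for any $\varphi \in C^\infty_0(\R^N)$, applying the Sobolev inequality in $\R^N$ to the compactly supported function $\varphi v_j$ gives
\[
S \left(\int |\varphi v_j|^{2^\ast}\, dx\right)^{2/2^\ast} \le \int |\nabla(\varphi v_j)|^2\, dx = \int \varphi^2\, |\nabla v_j|^2\, dx + R_j,
\]
where $R_j = \int |\nabla\varphi|^2\, v_j^2\, dx + 2 \int \varphi v_j\, \nabla\varphi \cdot \nabla v_j\, dx \to 0$ by the $L^2_{\loc}$-convergence $v_j \to 0$ (and Cauchy--Schwarz for the cross term, using that $\norm[L^2]{\nabla v_j}$ stays bounded). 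Passing to the weak-$*$ limit yields
\[
S \left(\int |\varphi|^{2^\ast}\, d\tilde\nu\right)^{2/2^\ast} \le \int \varphi^2\, d\tilde\mu \qquad \forall \varphi \in C^\infty_0(\R^N),
\]
which extends by standard approximation to the set inequality $S\, \tilde\nu(E)^{2/2^\ast} \le \tilde\mu(E)$ for every Borel set $E \subset \closure{\Omega}$.

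The main obstacle is the concluding measure-theoretic step: deducing from this inequality that $\tilde\nu$ is purely atomic. Rewriting the estimate as $\tilde\nu(E) \le S^{-2^\ast/2}\, \tilde\mu(E)^{2^\ast/2}$, for any Borel partition $\set{E_k}$ of $E$ with $\max_k \tilde\mu(E_k) \le \delta$ one has
\[
\tilde\nu(E) = \sum_k \tilde\nu(E_k) \le S^{-2^\ast/2} \sum_k \tilde\mu(E_k)^{2^\ast/2} \le S^{-2^\ast/2}\, \delta^{2^\ast/2 - 1}\, \tilde\mu(E).
\]
Since $2^\ast/2 > 1$ and $\tilde\mu$ is finite, refining partitions on the atomless part of $\tilde\mu$ (letting $\delta \to 0$) forces $\tilde\nu$ to vanish there; hence $\tilde\nu$ is concentrated on the at most countable common atomic support $\set{x_i}_{i \in I}$. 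Localizing the reverse-Sobolev inequality to bumps supported near a single $x_i$ gives the pointwise bound $S \nu_i^{2/2^\ast} \le \mu_i := \tilde\mu(\set{x_i})$, which is (ii); (i) is the atomic decomposition itself, and $\sum_i \nu_i^{2/2^\ast} \le S^{-1} \sum_i \mu_i \le S^{-1}\, \tilde\mu(\closure{\Omega}) < \infty$ gives the final summability.
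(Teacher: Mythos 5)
Your argument is correct. Note, however, that the paper does not prove Proposition \ref{Proposition 4.2} at all: it is quoted verbatim as the concentration compactness principle of Lions \cite{MR834360,MR850686}, so there is no in-paper proof to compare against. What you have written is the standard proof of that lemma (the one found in Lions' second limit-case paper and in Struwe's and Willem's books): subtract the weak limit, use Rellich and Brezis--Lieb to identify the defect measures $\tilde\nu = \nu - |u|^{2^\ast}dx$ and $\tilde\mu = \mu - |\nabla u|^2dx$, derive the reverse H\"older inequality $S\bigl(\int|\varphi|^{2^\ast}d\tilde\nu\bigr)^{2/2^\ast}\le\int\varphi^2\,d\tilde\mu$ from the Sobolev inequality applied to $\varphi v_j$, and conclude atomicity of $\tilde\nu$ from the resulting set inequality. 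All the steps check out; two points merit an explicit word of justification if this were written out in full: (a) the Brezis--Lieb lemma is needed in its localized form (tested against an arbitrary nonnegative $\varphi\in C(\closure{\Omega})$) to get the measure identity rather than just the identity of total masses, which follows from the same pointwise inequality $\bigl||a+b|^p-|a|^p\bigr|\le\eps|a|^p+C_\eps|b|^p$; and (b) the partition step on the nonatomic part of $\tilde\mu$ uses the standard fact (Sierpi\'nski) that a finite nonatomic Borel measure admits finite partitions with all pieces of measure at most $\delta$. With those remarks your proof is complete, and the conclusions about $\mu$ and $\nu$ themselves survive the subsequence extractions since $\tilde\mu$ and $\tilde\nu$ are determined by $\mu$, $\nu$, and $u$.
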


\begin{proof}[Proof of Proposition \ref{Proposition 1.4}]
Let $0 < \kappa < \kappa^\ast$, $c \le M$, and $\seq{u_j} \subset H^1_0(\Omega)$ be a \PS{c} sequence. Since $\B \ge 0$,
\[
\norm{u_j}^2 \le 2 J_\eps(u_j) + \lambda \int_\Omega (u_j^+)^2\, dx + \frac{2 \kappa}{2^\ast} \int_\Omega (u_j^+)^{2^\ast}\, dx.
\]
We have
\[
\int_\Omega (u_j^+)^{2^\ast} dx = \O(1) \left(\norm{u_j} + 1\right)
\]
by Lemma \ref{Lemma 4.1}, and hence
\[
\int_\Omega (u_j^+)^2\, dx = \O(1)\, \big(\norm{u_j}^{2/2^\ast} + 1\big)
\]
by the H\"{o}lder inequality. It follows that $\seq{u_j}$ is bounded, and hence so is $(u_j^+)$.

Passing to a subsequence, $u_j^+$ converges to some $v \ge 0$ weakly in $H^1_0(\Omega)$, strongly in $L^p(\Omega)$ for $1 \le p < 2^\ast$ and a.e.\! in $\Omega$, and
\begin{equation} \label{4.1}
|\nabla u_j^+|^2\, dx \wstar \mu, \qquad (u_j^+)^{2^\ast} dx \wstar \nu
\end{equation}
in the sense of measures, where $\mu$ and $\nu$ are bounded nonnegative measures on $\closure{\Omega}$ (see, e.g., Folland \cite{MR1681462}). By Proposition \ref{Proposition 4.2}, there exist an at most countable index set $I$ and points $x_i \in \closure{\Omega},\, i \in I$ such that
\begin{equation} \label{4.2}
\nu = v^{2^\ast} dx + \sum_{i \in I} \nu_i\, \delta_{x_i}, \qquad \mu \ge |\nabla v|^2\, dx + \sum_{i \in I} \mu_i\, \delta_{x_i},
\end{equation}
where $\nu_i,\, \mu_i > 0$ and $(\nu_i)^{2/2^\ast} \le \mu_i/S$.

Let $\varphi : \R^N \to [0,1]$ be a smooth function such that $\varphi(x) = 1$ for $|x| \le 1$ and $\varphi(x) = 0$ for $|x| \ge 2$. For $i \in I$ and $\rho > 0$, set
\[
\varphi_{i,\rho}(x) = \varphi\left(\frac{x - x_i}{\rho}\right), \quad x \in \R^N,
\]
and note that $\varphi_{i,\rho} : \R^N \to [0,1]$ is a smooth function such that $\varphi_{i,\rho}(x) = 1$ for $|x - x_i| \le \rho$ and $\varphi_{i,\rho}(x) = 0$ for $|x - x_i| \ge 2 \rho$. Moreover, the sequence $(\varphi_{i,\rho}\, u_j^+)$ is bounded in $H^1_0(\Omega)$ and hence $J_\eps'(u_j)\, (\varphi_{i,\rho}\, u_j^+) \to 0$ as $j \to \infty$. We have
\begin{equation} \label{4.3}
J_\eps'(u_j)\, (\varphi_{i,\rho}\, u_j^+) \ge \int_\Omega \left[\varphi_{i,\rho}\, |\nabla u_j^+|^2 + u_j^+\, \nabla u_j^+ \cdot \nabla \varphi_{i,\rho} - \lambda\, \varphi_{i,\rho}\, (u_j^+)^2 - \kappa\, \varphi_{i,\rho}\, (u_j^+)^{2^\ast}\right] dx.
\end{equation}
By \eqref{4.1},
\[
\int_\Omega \varphi_{i,\rho}\, |\nabla u_j^+|^2\, dx \to \int_\Omega \varphi_{i,\rho}\, d\mu, \qquad \int_\Omega \varphi_{i,\rho}\, (u_j^+)^{2^\ast} dx \to \int_\Omega \varphi_{i,\rho}\, d\nu,
\]
and
\[
\int_\Omega \varphi_{i,\rho}\, (u_j^+)^2\, dx \to \int_\Omega \varphi_{i,\rho}\, v^2\, dx \le \int_{\Omega \cap B_{2 \rho}(x_i)} v^2\, dx.
\]
Denoting by $C$ a generic positive constant independent of $j$ and $\rho$,
\begin{multline*}
\abs{\int_\Omega u_j^+\, \nabla u_j^+ \cdot \nabla \varphi_{i,\rho}\, dx} \le \frac{C}{\rho} \left(\int_{\Omega \cap B_{2 \rho}(x_i)} (u_j^+)^2\, dx\right)^{1/2} \to \frac{C}{\rho} \left(\int_{\Omega \cap B_{2 \rho}(x_i)} v^2\, dx\right)^{1/2}\\[7.5pt]
\le C \left(\int_{\Omega \cap B_{2 \rho}(x_i)} v^{2^\ast}\, dx\right)^{1/2^\ast}
\end{multline*}
by the H\"{o}lder inequality. So letting $j \to \infty$ in \eqref{4.3} gives
\[
\int_\Omega \varphi_{i,\rho}\, d\mu - \kappa \int_\Omega \varphi_{i,\rho}\, d\nu \le \lambda \int_{\Omega \cap B_{2 \rho}(x_i)} v^2\, dx + C \left(\int_{\Omega \cap B_{2 \rho}(x_i)} v^{2^\ast}\, dx\right)^{1/2^\ast}.
\]
Letting $\rho \searrow 0$ and using \eqref{4.2} now gives $\mu_i \le \kappa \nu_i$, which together with $(\nu_i)^{2/2^\ast} \le \mu_i/S$ then gives $\nu_i \ge (S/\kappa)^{N/2}$. In particular, $I$ is a finite set since $\nu$ is a bounded measure.

Since $(u_j^+)$ is bounded, taking $u = u_j$ in Lemma \ref{Lemma 4.1} and passing to the limit using \eqref{4.1} gives
\[
\frac{\kappa}{N} \int_\Omega d\nu \le c + \vol{\Omega},
\]
which together with \eqref{4.2} and $c \le M$ then gives
\[
\kappa\, \sum_{i \in I} \nu_i \le N(M + \vol{\Omega}).
\]
Since each $\nu_i \ge (S/\kappa)^{N/2}$ and $\kappa < \kappa^\ast$, this implies that $I = \emptyset$ and hence
\begin{equation} \label{4.4}
\int_\Omega (u_j^+)^{2^\ast} dx \to \int_\Omega v^{2^\ast} dx.
\end{equation}

Since $\seq{u_j}$ is bounded, a further subsequence converges to some $u$ weakly in $H^1_0(\Omega)$, strongly in $L^p(\Omega)$ for $1 \le p < 2^\ast$, and a.e.\! in $\Omega$. We have
\begin{multline*}
\abs{(u_j - 1)_+^{2^\ast - 1}\, (u_j - u)} = \abs{(u_j^+)^{2^\ast - 1}\, (u_j^+ + u_j^- - u)}\\[7.5pt]
= \abs{(u_j^+)^{2^\ast} + (u_j^+)^{2^\ast - 1} - (u_j^+)^{2^\ast - 1}\, u} \le \left(2 - \frac{1}{2^\ast}\right) (u_j^+)^{2^\ast} + \frac{1}{2^\ast}\, (|u| + 1)^{2^\ast}
\end{multline*}
by Young's inequality, so
\[
\int_\Omega (u_j - 1)_+^{2^\ast - 1}\, (u_j - u)\, dx \to 0
\]
by \eqref{4.4} and the dominated convergence theorem. Then $u_j \to u$ in $H^1_0(\Omega)$ by a standard argument.
\end{proof}

Now we turn to the proof of Proposition \ref{Proposition 1.5}. If $u$ is a critical point of $J_\eps$, then $u$ is a weak solution of
\begin{equation} \label{4.5}
\left\{\begin{aligned}
- \Delta u & = - \frac{1}{\eps}\, \beta\left(\frac{u - 1}{\eps}\right) + \lambda\, (u - 1)_+ + \kappa\, (u - 1)_+^{2^\ast - 1} && \text{in } \Omega\\[10pt]
u & = 0 && \text{on } \bdry{\Omega}
\end{aligned}\right.
\end{equation}
and hence also a classical solution by elliptic regularity theory. By the maximum principle, either $u > 0$ in $\Omega$ or $u$ vanishes identically. We make use of the following $L^\infty$ bound, obtained in Perera and Silva \cite{MR2282829}, for weak solutions of the boundary value problem
\begin{equation} \label{4.6}
\left\{\begin{aligned}
- \Delta u & = h(x,u) && \text{in } \Omega\\[10pt]
u & > 0 && \text{in } \Omega\\[10pt]
u & = 0 && \text{on } \bdry{\Omega},
\end{aligned}\right.
\end{equation}
where $h$ is a Carath\'eodory function on $\Omega \times (0,\infty)$ satisfying
\begin{enumerate}
\item[$(h)$] $\exists\, r > N/2$ and $a \in L^r(\Omega)$ such that $h(x,t) \le a(x)\, t$ for a.a.\! $x \in \Omega$ and all $t > 0$.
\end{enumerate}

\newpage

\begin{proposition}[{\cite[Lemma A.1 \& Remark A.3]{MR2282829}}] \label{Proposition 4.3}
Suppose that $u \in H^1_0(\Omega)$ is a weak solution of problem \eqref{4.6}. Then there exists a constant $C > 0$, depending only on $\Omega$, $\norm[r]{a}$ and $\norm[2]{u}$, such that
\[
\norm[\infty]{u} \le C.
\]
\end{proposition}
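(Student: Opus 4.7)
The plan is to combine a Br{\'e}zis-Kato-type bootstrap with a Moser iteration, exploiting in a crucial way that the hypothesis $a \in L^r(\Omega)$ with $r > N/2$ sits exactly at the Sobolev-critical threshold needed to absorb the nonlinear contribution into the $L^{2^\ast}$ norm. Once $u \in L^s(\Omega)$ for every finite $s$ has been secured, a standard $L^p$-to-$L^\infty$ iteration delivers the desired uniform bound.

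For each $\beta \ge 0$ and truncation level $k > 0$, set $u_k = \min \set{u,k}$ and test problem \eqref{4.6} with $\varphi = u\, u_k^{2\beta} \in H^1_0(\Omega)$. Since $\nabla u_k$ vanishes on $\set{u > k}$, a direct calculation yields
\[
(2\beta+1) \int_{\set{u \le k}} |\nabla u|^2\, u^{2\beta}\, dx + \int_{\set{u > k}} |\nabla u|^2\, k^{2\beta}\, dx = \int_\Omega h(x,u)\, u\, u_k^{2\beta}\, dx \le \int_\Omega a\, w_k^2\, dx,
\]
where $w_k := u\, u_k^\beta$. Comparing with the pointwise identity
\[
|\nabla w_k|^2 = (\beta+1)^2\, u^{2\beta}\, |\nabla u|^2\, \goodchi_{\set{u \le k}} + k^{2\beta}\, |\nabla u|^2\, \goodchi_{\set{u > k}}
\]
gives $\norm[2]{\nabla w_k}^2 \le (\beta+1)^2 \int_\Omega a\, w_k^2\, dx$. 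Combining this with the Sobolev inequality $S\, \norm[2^\ast]{w_k}^2 \le \norm[2]{\nabla w_k}^2$, splitting $a = a_1 + a_2$ with $a_1 \in L^\infty(\Omega)$ and $\norm[r]{a_2}$ as small as desired, and applying the interpolation
\[
\norm[2r']{w_k}^2 \le \eps\, \norm[2^\ast]{w_k}^2 + C(\eps)\, \norm[2]{w_k}^2
\]
(available since $r > N/2$ forces $2 < 2r' < 2^\ast$, where $r' = r/(r-1)$), one chooses $\eps \sim 1/(\beta+1)^2$ to absorb the resulting $L^{2^\ast}$ term into the left-hand side, arriving at
\[
\norm[2^\ast]{w_k}^2 \le C_\beta\, \norm[2]{w_k}^2,
\]
where $C_\beta$ is polynomial in $\beta$ and depends only on $N$, $\Omega$ and $\norm[r]{a}$. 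Sending $k \to \infty$ by monotone convergence yields
\[
\norm[2^\ast(1+\beta)]{u}^{1+\beta} \le C_\beta^{1/2}\, \norm[2(1+\beta)]{u}^{1+\beta}
\]
whenever the right-hand side is finite.

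The Moser iteration is now routine. Set $\chi = 2^\ast/2 = N/(N-2) > 1$ and define $1 + \beta_n = \chi^n$ for $n \ge 1$. The case $\beta = 0$ of the argument above controls $\norm[2^\ast]{u}$ by $\norm[2]{u}$, and iterating the displayed estimate gives
\[
\norm[2\chi^{n+1}]{u} \le \left(\prod_{j=1}^n C_{\beta_j}^{1/(2\chi^j)}\right) \norm[2^\ast]{u}.
\]
Since $\log C_{\beta_n} = \O(n)$ while $\sum_{n \ge 1} n/\chi^n < \infty$, the infinite product converges, and letting $n \to \infty$ yields $\norm[\infty]{u} \le C$ with $C$ depending only on $\Omega$, $\norm[r]{a}$ and $\norm[2]{u}$.

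The main technical obstacle is the Br{\'e}zis-Kato step itself: one must keep the dependence of $C_\beta$ on $\beta$ strictly polynomial (rather than exponential), which forces a careful bookkeeping in the interpolation because the small parameter $\eps$ is itself $\beta$-dependent. Once this polynomial control is secured, the geometric decay $1/\chi^n$ in the exponents of the iteration dominates the logarithmic growth of $\log C_{\beta_n}$, and the remaining ingredients---the truncation device, the H\"{o}lder-Sobolev interpolation, and the classical $L^p$ iteration---are entirely standard.
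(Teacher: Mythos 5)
The paper does not actually prove Proposition \ref{Proposition 4.3}: it is imported verbatim from Perera and Silva \cite[Lemma A.1 \& Remark A.3]{MR2282829}, so there is no internal argument to compare against. Your self-contained proof is the standard Br\'ezis--Kato/Moser iteration and is essentially correct --- the test function $u\,u_k^{2\beta}$, the pointwise identity for $|\nabla w_k|^2$ (note $(\beta+1)^2\ge 2\beta+1$, which is what lets you pass from the tested equation to $\norm[2]{\nabla w_k}^2\le(\beta+1)^2\int_\Omega a\,w_k^2\,dx$), the absorption with $\eps\sim(\beta+1)^{-2}$, and the convergence of $\prod_j C_{\beta_j}^{1/(2\chi^j)}$ are all in order; this is also how the cited lemma is proved in the reference (there for the $p$-Laplacian). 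One point should be cleaned up: the decomposition $a=a_1+a_2$ with $\norm[r]{a_2}$ small is both unnecessary and, if genuinely relied upon, damaging to the stated dependence of the constant. The truncation level defining $a_1$ is governed by the equi-integrability of $|a|^r$, hence by $a$ itself and not merely by $\norm[r]{a}$; that device is what one needs in the borderline case $r=N/2$. Here $r>N/2$ gives $2<2r'<2^\ast$ strictly, so take $a_1=0$, estimate $\int_\Omega a\,w_k^2\,dx\le\norm[r]{a}\,\norm[2r']{w_k}^2$ directly, and let the interpolation parameter $\eps=S/\bigl(2(\beta+1)^2\norm[r]{a}\bigr)$ do all the absorbing; then $C(\eps)\sim\eps^{-\theta/(1-\theta)}$ is polynomial in $\beta+1$ and $C_\beta$ depends only on $N$, $\Omega$ and $\norm[r]{a}$, as the statement requires. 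It is also worth saying explicitly that the absorption is legitimate because $\norm[2^\ast]{w_k}\le k^\beta\norm[2^\ast]{u}<\infty$ before the passage $k\to\infty$ --- which is precisely what your truncation guarantees. With these adjustments the proof is complete and matches the claimed dependence on $\Omega$, $\norm[r]{a}$ and $\norm[2]{u}$.
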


\begin{proof}[Proof of Proposition \ref{Proposition 1.5}]
Let $u$ be a critical point of $J_\eps$ with $J_\eps(u) \le M$. Then $u$ is a weak solution of problem \eqref{4.6} with
\[
h(x,t) = - \frac{1}{\eps}\, \beta\left(\frac{t - 1}{\eps}\right) + \lambda\, (t - 1)_+ + \kappa\, (u(x) - 1)_+^\frac{4}{N-2}\, (t - 1)_+, \quad (x,t) \in \Omega \times (0,\infty).
\]
We have
\[
h(x,t) \le \left[\lambda + \kappa\, u(x)^\frac{4}{N-2}\right] t = \left[\lambda + \kappa\, w(x)^{4/N}\right] t \quad \forall x \in \Omega,\, t > 0,
\]
where $w = u^\frac{N}{N-2}$. We will show that if $0 < \kappa < \kappa_\ast$, then for
\[
r = \frac{N^2}{2\, (N - 2)} > \frac{N}{2},
\]
$\|w^{4/N}\|_r = \norm[2^\ast]{w}^{4/N}$ has a bound that depends only on $\norm[2^\ast]{u}$, $N$, $M$, $\vol{\Omega}$, $\kappa$ and $\lambda$. Since $\norm[2^\ast]{u}$ and $\norm[2]{u}$ have bounds that depend only on $N$, $M$, $\vol{\Omega}$, and $\kappa$ as noted in the paragraph before the statement of Proposition \ref{Proposition 1.5}, the conclusion will then follow from Proposition \ref{Proposition 4.3}.

By \eqref{1.4},
\begin{equation} \label{4.7}
S \left(\int_\Omega w^{2^\ast}\, dx\right)^{2/2^\ast} \le \int_\Omega |\nabla w|^2\, dx = \left(\frac{N}{N - 2}\right)^2 \int_\Omega u^\frac{4}{N-2}\, |\nabla u|^2\, dx.
\end{equation}
Noting that $u \in C^1(\closure{\Omega})$ by standard regularity arguments and testing \eqref{4.5} with $u^{2^\ast - 1}$ gives
\begin{equation}
\frac{N + 2}{N - 2}\, \int_\Omega u^\frac{4}{N-2}\, |\nabla u|^2\, dx \le \lambda \int_\Omega u^{2^\ast}\, dx + \kappa \int_\Omega w^2\, (u^+)^\frac{4}{N-2}\, dx.
\end{equation}
By the H\"{o}lder inequality and \eqref{1.5},
\begin{equation} \label{4.9}
\int_\Omega w^2\, (u^+)^\frac{4}{N-2}\, dx \le \left(\int_\Omega w^{2^\ast}\, dx\right)^{2/2^\ast} \left(\frac{N(M + \vol{\Omega})}{\kappa}\right)^{2/N}.
\end{equation}
Combining \eqref{4.7}--\eqref{4.9} gives
\[
\left[\left(1 - \frac{4}{N^2}\right) S - \kappa^\frac{N-2}{N}\, \big[N(M + \vol{\Omega})\big]^{2/N}\right] \norm[2^\ast]{w}^2 \le \lambda \norm[2^\ast]{u}^{2^\ast}.
\]
Since $\kappa < \kappa_\ast$, this gives the required bound for $\norm[2^\ast]{w}$.
\end{proof}

\section{Existence of a ground state}

In this section we prove Theorem \ref{Theorem 1.2}. We have
\[
J_\eps(u) \ge \int_\Omega \left[\frac{1}{2}\, |\nabla u|^2 - \left(\frac{\lambda}{2} + \frac{\kappa}{2^\ast}\right) |u|^{2^\ast}\right] dx \ge \frac{1}{2} \norm{u}^2 - \left(\frac{\lambda}{2} + \frac{\kappa}{2^\ast}\right) S^{- \frac{N}{N-2}} \norm{u}^{2^\ast}
\]
by \eqref{1.4}, so there exists a constant $\rho > 0$, depending only on $N$, $\kappa$ and $\lambda$, such that
\[
\norm{u} \le \rho \implies J_\eps(u) \ge \frac{1}{3} \norm{u}^2.
\]
Moreover,
\[
J_\eps(u) \le \int_\Omega \left[\frac{1}{2}\, |\nabla u|^2 + 1 - \frac{\kappa}{2^\ast}\, (u^+)^{2^\ast}\right] dx
\]
and hence there exists a function $u_0 \in H^1_0(\Omega)$ such that $J_\eps(u_0) < 0 = J_\eps(0)$. So the class of paths
\[
\Gamma_\eps = \set{\gamma \in C([0,1],H^1_0(\Omega)) : \gamma(0) = 0,\, J_\eps(\gamma(1)) < 0}
\]
is nonempty and
\begin{equation} \label{5.1}
c_\eps := \inf_{\gamma \in \Gamma_\eps}\, \max_{u \in \gamma([0,1])}\, J_\eps(u) \ge \frac{\rho^2}{3}.
\end{equation}
Since $\B((t - 1)/\eps) \le \goodchi_{\set{t > 1}}$ for all $t$, $J_\eps(u) \le J(u)$ for all $u \in H^1_0(\Omega)$. So $\Gamma \subset \Gamma_\eps$ and
\[
c_\eps \le \max_{u \in \gamma([0,1])}\, J_\eps(u) \le \max_{u \in \gamma([0,1])}\, J(u) \quad \forall \gamma \in \Gamma,
\]
and hence
\begin{equation} \label{5.2}
c_\eps \le c.
\end{equation}

\begin{lemma} \label{Lemma 5.1}
Given $\lambda_\ast > \lambda_1$, there exists a constant $M_{\lambda_\ast} > 0$, depending only on $\lambda_\ast$ and $\Omega$, such that $c \le M_{\lambda_\ast}$ for all $\lambda \ge \lambda_\ast$ and $\kappa > 0$.
\end{lemma}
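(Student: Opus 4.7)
The plan is to exhibit an explicit path in $\Gamma$ along a ray from the normalized first eigenfunction and to bound the maximum of $J$ along it using only $\lambda_\ast$ and $\Omega$. Let $\phi_1 > 0$ be the first Dirichlet eigenfunction on $\Omega$ normalized by $\int_\Omega \phi_1^2\, dx = 1$, so that $\int_\Omega |\nabla \phi_1|^2\, dx = \lambda_1$ and $\int_\Omega \phi_1\, dx \le \vol{\Omega}^{1/2}$ by Cauchy--Schwarz. For $T > 0$ to be chosen I would take $\gamma_T(t) := tT\phi_1$ for $t \in [0,1]$; then $\max_{t \in [0,1]}\, J(\gamma_T(t)) = \max_{s \in [0,T]}\, J(s\phi_1)$, and the task reduces to bounding $J(s\phi_1)$ for $s \ge 0$ in a way uniform in $\lambda \ge \lambda_\ast$ and $\kappa > 0$.

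I would first discard the two non-positive ingredients of $J$---the critical term $-(\kappa/2^\ast)\int (s\phi_1 - 1)_+^{2^\ast}\, dx \le 0$, which kills the $\kappa$-dependence, and the trivial estimate $\vol{\set{s\phi_1 > 1}} \le \vol{\Omega}$---and then use $\lambda \ge \lambda_\ast$ to obtain
\[
J(s\phi_1) \le \frac{s^2 \lambda_1}{2} + \vol{\Omega} - \frac{\lambda_\ast}{2} \int_\Omega (s\phi_1 - 1)_+^2\, dx.
\]
To lower-bound the remaining integral, I would start from the exact identity $\int_\Omega (s\phi_1 - 1)^2\, dx = s^2 - 2s \int_\Omega \phi_1\, dx + \vol{\Omega}$ and correct by the sublevel contribution, which is at most $\vol{\Omega}$ because $(s\phi_1 - 1)^2 \le 1$ on $\set{s\phi_1 \le 1}$. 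The two $\vol{\Omega}$ terms cancel, leaving $\int_\Omega (s\phi_1 - 1)_+^2\, dx \ge s^2 - 2s\int_\Omega \phi_1\, dx$. Substituting yields
\[
J(s\phi_1) \le \frac{s^2 (\lambda_1 - \lambda_\ast)}{2} + \lambda_\ast s \int_\Omega \phi_1\, dx + \vol{\Omega},
\]
a downward parabola in $s$ (since $\lambda_\ast > \lambda_1$) whose global maximum over $s \ge 0$ is at most
\[
\frac{\lambda_\ast^2\, \bigl(\int_\Omega \phi_1\, dx\bigr)^2}{2(\lambda_\ast - \lambda_1)} + \vol{\Omega} \le \frac{\lambda_\ast^2\, \vol{\Omega}}{2(\lambda_\ast - \lambda_1)} + \vol{\Omega} =: M_{\lambda_\ast},
\]
a constant depending only on $\lambda_\ast$ and $\Omega$ (through $\lambda_1 = \lambda_1(\Omega)$).

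Finally, to verify $\gamma_T \in \Gamma$, I would choose $T$ so large that the same quadratic upper bound is strictly negative at $s = T$; since its leading coefficient $(\lambda_1 - \lambda_\ast)/2$ is bounded away from $0$ uniformly in $\lambda \ge \lambda_\ast$ and the lower-order coefficients depend only on $\lambda_\ast$ and $\Omega$, such a $T$ can be chosen depending only on those data. Then $J(T\phi_1) < 0$, so $\gamma_T \in \Gamma$, and \eqref{1.3} gives $c \le \max_{s \in [0,T]}\, J(s\phi_1) \le M_{\lambda_\ast}$. There is no serious obstacle; the most delicate point is the lower bound on $\int (s\phi_1 - 1)_+^2\, dx$, where the clean cancellation of the two $\vol{\Omega}$ terms is what makes the final constant independent of $\kappa$ (trivially, once the critical term is dropped) and of $\lambda$ beyond the $\lambda_\ast$-dependence absorbed into the coefficient $\lambda_\ast - \lambda_1$.
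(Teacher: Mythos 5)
Your proposal is correct and follows essentially the same route as the paper: a path along the ray $s \mapsto s\phi_1$ through the first eigenfunction, discarding the nonpositive critical term to remove the $\kappa$-dependence, and using $\lambda \ge \lambda_\ast > \lambda_1$ to make the resulting quadratic-in-$s$ upper bound have a finite supremum. The only difference is that you carry out explicitly the quantitative estimate (via the normalization of $\phi_1$ and the lower bound on $\int_\Omega (s\phi_1 - 1)_+^2\, dx$) that the paper summarizes by asserting the finiteness of $M_\lambda$ and its monotonicity in $\lambda$.
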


\begin{proof}
Let $\varphi_1 > 0$ be an eigenfunction associated with $\lambda_1$. Then
\[
J(t \varphi_1) = \int_\Omega \left[\frac{t^2}{2}\, |\nabla \varphi_1|^2 + \goodchi_{\set{t \varphi_1 > 1}}(x) - \frac{\lambda}{2}\, (t \varphi_1 - 1)_+^2 - \frac{\kappa}{2^\ast}\, (t \varphi_1 - 1)_+^{2^\ast}\right] dx \to - \infty
\]
as $t \to \infty$. Taking $t_0 > 0$ so large that $J(t_0 \varphi_1) < 0$ and reparametrizing the path \linebreak $[0,t_0] \to H^1_0(\Omega),\, t \mapsto t \varphi_1$ gives a path in $\Gamma$ on which $J$ is less than or equal to
\[
M_\lambda := \sup_{t \ge 0}\, \int_\Omega \left[\frac{\lambda_1}{2}\, t^2 \varphi_1^2 + 1 - \frac{\lambda}{2}\, (t \varphi_1 - 1)_+^2\right] dx,
\]
which is finite for $\lambda > \lambda_1$, so $c \le M_\lambda$. Clearly, $M_\lambda \le M_{\lambda_\ast}$ if $\lambda \ge \lambda_\ast$.
\end{proof}

\begin{proof}[Proof of Theorem \ref{Theorem 1.2}]
Let $\lambda_\ast > \lambda_1$, let $M_{\lambda_\ast} > 0$ be as in Lemma \ref{Lemma 5.1}, let $\kappa^\ast > \kappa_\ast > 0$ be as in Propositions \ref{Proposition 1.4} and \ref{Proposition 1.5}, respectively, with $M = M_{\lambda_\ast}$, and let $\lambda \ge \lambda_\ast$ and $0 < \kappa < \kappa_\ast$. Then $c_\eps \le c \le M_{\lambda_\ast}$ by \eqref{5.2} and Lemma \ref{Lemma 5.1}, so $J_\eps$ satisfies the \PS{c_\eps} condition by Proposition \ref{Proposition 1.4}, and hence $J_\eps$ has a critical point $u_\eps$ at the level $c_\eps$ by a standard argument. Take a sequence $\eps_j \searrow 0$ and let $u_j = u_{\eps_j},\, c_j = c_{\eps_j}$. Then $\seq{u_j}$ is bounded in $H^1_0(\Omega) \cap L^\infty(\Omega)$ by Proposition \ref{Proposition 1.5} and the paragraph preceding the proposition. Hence a renamed subsequence converges, uniformly on $\closure{\Omega}$ and strongly in $H^1_0(\Omega)$, to a locally Lipschitz continuous function $u \in H^1_0(\Omega) \cap C(\closure{\Omega}) \cap C^1(\closure{\Omega} \setminus \bdry{\set{u > 1}}) \cap C^2(\Omega \setminus \bdry{\set{u > 1}})$ by Theorem \ref{Theorem 1.3} \ref{Theorem 1.3.i} and \ref{Theorem 1.3.ii}. Moreover, $\limsup c_j \ge \rho^2/3 > 0$ by \eqref{5.1} and hence $u$ is nontrivial by Theorem \ref{Theorem 1.3} \ref{Theorem 1.3.iii}.

Also by Theorem \ref{Theorem 1.3}, $u$ satisfies the equation $- \Delta u = \lambda\, (u - 1)_+ + \kappa\, (u - 1)_+^{2^\ast - 1}$ in the classical sense in $\Omega \setminus \bdry{\set{u > 1}}$, the free boundary condition $|\nabla u^+|^2 - |\nabla u^-|^2 = 2$ in the generalized sense \eqref{1.2}, and vanishes on $\bdry{\Omega}$. In particular, $u \in \M$ and hence
\[
\inf_\M\, J \le J(u).
\]
Since we also have
\[
J(u) \le \liminf c_j \le \limsup c_j \le c \le \inf_\M\, J
\]
by Theorem \ref{Theorem 1.3} \ref{Theorem 1.3.iii}, \eqref{5.2}, and Proposition \ref{Proposition 2.1}, then
\[
J(u) = \inf_\M\, J = c,
\]
so $u$ is a ground state at the level $c \ge \rho^2/3 > 0$. Hence $u$ is a mountain pass point of $J$ by Proposition \ref{Proposition 2.1} and $u$ is nondegenerate by Theorem \ref{Theorem 1.1}. The rest then follows from the last part of Theorem \ref{Theorem 1.3}.
\end{proof}

\def\cdprime{$''$}


\begin{thebibliography}{10}

\bibitem{MR618549}
H.~W. Alt and L.~A. Caffarelli.
\newblock Existence and regularity for a minimum problem with free boundary.
\newblock {\em J. Reine Angew. Math.}, 325:105--144, 1981.

\bibitem{MR732100}
H.~W. Alt, L.~A. Caffarelli, and A. Friedman.
\newblock Variational problems with two phases and their free boundaries.
\newblock {\em Trans. Amer. Math. Soc.}, 282(2):431--461, 1984.

\bibitem{MR829403}
A.~Ambrosetti and M.~Struwe.
\newblock A note on the problem {$-\Delta u=\lambda u+u\vert u\vert \sp {2\sp
  \ast-2}$}.
\newblock {\em Manuscripta Math.}, 54(4):373--379, 1986.

\bibitem{MR1044809}
H.~Berestycki, L.~A. Caffarelli, and L.~Nirenberg.
\newblock Uniform estimates for regularization of free boundary problems.
\newblock In {\em Analysis and partial differential equations}, volume 122 of
  {\em Lecture Notes in Pure and Appl. Math.}, pages 567--619. Dekker, New
  York, 1990.

\bibitem{MR709644}
H.~Br{\'e}zis and L.~Nirenberg.
\newblock Positive solutions of nonlinear elliptic equations involving critical
  {S}obolev exponents.
\newblock {\em Comm. Pure Appl. Math.}, 36(4):437--477, 1983.

\bibitem{MR2145284}
L.~Caffarelli and S.~Salsa.
\newblock {\em A geometric approach to free boundary problems}, volume~68 of
  {\em Graduate Studies in Mathematics}.
\newblock American Mathematical Society, Providence, RI, 2005.

\bibitem{MR861482}
L.~A. Caffarelli.
\newblock A {H}arnack inequality approach to the regularity of free boundaries.
\newblock {\em Comm. Pure Appl. Math.}, 39(S, suppl.):S41--S45, 1986.
\newblock Frontiers of the mathematical sciences: 1985 (New York, 1985).

\bibitem{MR990856}
L.~A. Caffarelli.
\newblock A {H}arnack inequality approach to the regularity of free boundaries.
  {I}. {L}ipschitz free boundaries are {$C^{1,\alpha}$}.
\newblock {\em Rev. Mat. Iberoamericana}, 3(2):139--162, 1987.

\bibitem{MR1029856}
L.~A. Caffarelli.
\newblock A {H}arnack inequality approach to the regularity of free boundaries.
  {III}.\ {E}xistence theory, compactness, and dependence on {$X$}.
\newblock {\em Ann. Scuola Norm. Sup. Pisa Cl. Sci. (4)}, 15(4):583--602
  (1989), 1988.

\bibitem{MR973745}
L.~A. Caffarelli.
\newblock A {H}arnack inequality approach to the regularity of free boundaries.
  {II}. {F}lat free boundaries are {L}ipschitz.
\newblock {\em Comm. Pure Appl. Math.}, 42(1):55--78, 1989.

\bibitem{MR587175}
L.~A. Caffarelli and A. Friedman.
\newblock Asymptotic estimates for the plasma problem.
\newblock {\em Duke Math. J.}, 47(3):705--742, 1980.

\bibitem{MR1906591}
L.~A. Caffarelli, D. Jerison, and C.~E. Kenig.
\newblock Some new monotonicity theorems with applications to free boundary
  problems.
\newblock {\em Ann. of Math. (2)}, 155(2):369--404, 2002.

\bibitem{MR2082392}
L.~A. Caffarelli, D. Jerison, and C.~E. Kenig.
\newblock Global energy minimizers for free boundary problems and full
  regularity in three dimensions.
\newblock In {\em Noncompact problems at the intersection of geometry,
  analysis, and topology}, volume 350 of {\em Contemp. Math.}, pages 83--97.
  Amer. Math. Soc., Providence, RI, 2004.

\bibitem{MR831041}
A.~Capozzi, D.~Fortunato, and G.~Palmieri.
\newblock An existence result for nonlinear elliptic problems involving
  critical {S}obolev exponent.
\newblock {\em Ann. Inst. H. Poincar\'e Anal. Non Lin\'eaire}, 2(6):463--470,
  1985.

\bibitem{MR779872}
G. Cerami, D. Fortunato, and M. Struwe.
\newblock Bifurcation and multiplicity results for nonlinear elliptic problems
  involving critical {S}obolev exponents.
\newblock {\em Ann. Inst. H. Poincar\'e Anal. Non Lin\'eaire}, 1(5):341--350,
  1984.

\bibitem{MR1306583}
D.~G. Costa and E.~A. Silva.
\newblock A note on problems involving critical {S}obolev exponents.
\newblock {\em Differential Integral Equations}, 8(3):673--679, 1995.

\bibitem{MR1784441}
E.~A.~D.~B.~Silva and S.~H.~M. Soares.
\newblock Quasilinear {D}irichlet problems in {${\bf R}^n$} with critical
  growth.
\newblock {\em Nonlinear Anal.}, 43(1):1--20, 2001.

\bibitem{MR2572253}
D.~D.~Silva and D.~Jerison.
\newblock A singular energy minimizing free boundary.
\newblock {\em J. Reine Angew. Math.}, 635:1--21, 2009.

\bibitem{MR1473856}
P.~Dr{\'a}bek and Y.~X. Huang.
\newblock Multiplicity of positive solutions for some quasilinear elliptic
  equation in {${\bf R}\sp N$} with critical {S}obolev exponent.
\newblock {\em J. Differential Equations}, 140(1):106--132, 1997.

\bibitem{MR1644436}
M.~Flucher and J.~Wei.
\newblock Asymptotic shape and location of small cores in elliptic
  free-boundary problems.
\newblock {\em Math. Z.}, 228(4):683--703, 1998.

\bibitem{MR1681462}
G.~B. Folland.
\newblock {\em Real analysis}.
\newblock Pure and Applied Mathematics (New York). John Wiley \& Sons, Inc.,
  New York, second edition, 1999.
\newblock Modern techniques and their applications, A Wiley-Interscience
  Publication.

\bibitem{MR1009785}
A.~Friedman.
\newblock {\em Variational principles and free-boundary problems}.
\newblock Robert E. Krieger Publishing Co. Inc., Malabar, FL, second edition,
  1988.

\bibitem{MR1360544}
A.~Friedman and Y.~Liu.
\newblock A free boundary problem arising in magnetohydrodynamic system.
\newblock {\em Ann. Scuola Norm. Sup. Pisa Cl. Sci. (4)}, 22(3):375--448, 1995.

\bibitem{MR1083144}
J.~G.~Azorero and I.~P.~Alonso.
\newblock Multiplicity of solutions for elliptic problems with critical
  exponent or with a nonsymmetric term.
\newblock {\em Trans. Amer. Math. Soc.}, 323(2):877--895, 1991.

\bibitem{MR1441856}
F.~Gazzola and B.~Ruf.
\newblock Lower-order perturbations of critical growth nonlinearities in
  semilinear elliptic equations.
\newblock {\em Adv. Differential Equations}, 2(4):555--572, 1997.

\bibitem{MR1695021}
N.~Ghoussoub and C.~Yuan.
\newblock Multiple solutions for quasi-linear {PDE}s involving the critical
  {S}obolev and {H}ardy exponents.
\newblock {\em Trans. Amer. Math. Soc.}, 352(12):5703--5743, 2000.

\bibitem{MR1491613}
J.~V.~Gon{\c{c}}alves and C.~O.~Alves.
\newblock Existence of positive solutions for {$m$}-{L}aplacian equations in
  {${\bf R}^N$} involving critical {S}obolev exponents.
\newblock {\em Nonlinear Anal.}, 32(1):53--70, 1998.

\bibitem{MR1009077}
M.~Guedda and L.~V{\'e}ron.
\newblock Quasilinear elliptic equations involving critical {S}obolev
  exponents.
\newblock {\em Nonlinear Anal.}, 13(8):879--902, 1989.

\bibitem{MR812787}
H.~Hofer.
\newblock A geometric description of the neighbourhood of a critical point
  given by the mountain-pass theorem.
\newblock {\em J. London Math. Soc. (2)}, 31(3):566--570, 1985.

\bibitem{JePe2}
D.~Jerison and K.~Perera.
\newblock Existence and regularity of higher critical points in elliptic free
  boundary problems.
\newblock preprint.

\bibitem{JePe}
D.~Jerison and K.~Perera.
\newblock A multiplicity result for the prandtl-batchelor free boundary
  problem.
\newblock preprint.

\bibitem{MR0440187}
D.~Kinderlehrer and L.~Nirenberg.
\newblock Regularity in free boundary problems.
\newblock {\em Ann. Scuola Norm. Sup. Pisa Cl. Sci. (4)}, 4(2):373--391, 1977.

\bibitem{MR1664689}
C.~Lederman and N.~Wolanski.
\newblock Viscosity solutions and regularity of the free boundary for the limit
  of an elliptic two phase singular perturbation problem.
\newblock {\em Ann. Scuola Norm. Sup. Pisa Cl. Sci. (4)}, 27(2):253--288
  (1999), 1998.

\bibitem{MR2281453}
C.~Lederman and N.~Wolanski.
\newblock A two phase elliptic singular perturbation problem with a forcing
  term.
\newblock {\em J. Math. Pures Appl. (9)}, 86(6):552--589, 2006.

\bibitem{MR834360}
P.~L.~Lions.
\newblock The concentration-compactness principle in the calculus of
  variations. {T}he limit case. {I}.
\newblock {\em Rev. Mat. Iberoamericana}, 1(1):145--201, 1985.

\bibitem{MR850686}
P.~L.~Lions.
\newblock The concentration-compactness principle in the calculus of
  variations. {T}he limit case. {II}.
\newblock {\em Rev. Mat. Iberoamericana}, 1(2):45--121, 1985.

\bibitem{MR2282829}
K.~Perera and E.~A.~B. Silva.
\newblock On singular {$p$}-{L}aplacian problems.
\newblock {\em Differential Integral Equations}, 20(1):105--120, 2007.

\bibitem{MR1932180}
M.~Shibata.
\newblock Asymptotic shape of a least energy solution to an elliptic
  free-boundary problem with non-autonomous nonlinearity.
\newblock {\em Asymptot. Anal.}, 31(1):1--42, 2002.

\bibitem{MR1961520}
E.~A.~B. Silva and M.~S. Xavier.
\newblock Multiplicity of solutions for quasilinear elliptic problems involving
  critical {S}obolev exponents.
\newblock {\em Ann. Inst. H. Poincar\'e Anal. Non Lin\'eaire}, 20(2):341--358,
  2003.

\bibitem{MR0412637}
R.~Temam.
\newblock A non-linear eigenvalue problem: the shape at equilibrium of a
  confined plasma.
\newblock {\em Arch. Rational Mech. Anal.}, 60(1):51--73, 1975/76.

\bibitem{MR0602544}
R.~Temam.
\newblock Remarks on a free boundary value problem arising in plasma physics.
\newblock {\em Comm. Partial Differential Equations}, 2(6):563--585, 1977.

\bibitem{MR1154480}
Z.~H.~Wei and X.~M.~Wu.
\newblock A multiplicity result for quasilinear elliptic equations involving
  critical {S}obolev exponents.
\newblock {\em Nonlinear Anal.}, 18(6):559--567, 1992.

\bibitem{MR1620644}
G.~S.~Weiss.
\newblock Partial regularity for weak solutions of an elliptic free boundary
  problem.
\newblock {\em Comm. Partial Differential Equations}, 23(3-4):439--455, 1998.

\bibitem{MR1759450}
G.~S.~Weiss.
\newblock Partial regularity for a minimum problem with free boundary.
\newblock {\em J. Geom. Anal.}, 9(2):317--326, 1999.

\end{thebibliography}
\end{document}